\tikzstyle{nodo}=[circle,draw,fill,inner sep=0pt,minimum size=%
\tikzstyle{infinito}=[circle,inner sep=0pt,minimum size=0mm]
\newtheorem{theorem}{Theorem}[section]
\newtheorem{lemma}[theorem]{Lemma}
\newtheorem{proposition}[theorem]{Proposition}
\newtheorem*{problem}{Problem}
\theoremstyle{remark}
\newtheorem{remark}[theorem]{Remark}
\newtheorem*{remark*}{Remark}
\theoremstyle{definition}
\newtheorem{definition}[theorem]{Definition}
\DeclarePairedDelimiter\abs{\lvert}{\rvert}%
\newcommand{\R}{\mathbb{R}}
\newcommand{\Rd}{\mathbb{R}^2}
\newcommand{\Rp}{\mathbb{R}^+}
\newcommand{\C}{\mathbb{C}}
\newcommand{\N}{\mathbb{N}}
\newcommand{\NN}{\mathcal{N}}
\newcommand{\D}{\mathcal{D}}
\newcommand{\Eps}{\mathcal{E}}
\newcommand{\G}{\mathcal{G}}
\newcommand{\I}{\mathcal{I}}
\newcommand{\F}{\mathcal{F}}
\renewcommand{\Re}{\mathrm{Re}}
\renewcommand{\leq}{\leqslant}
\renewcommand{\geq}{\geqslant}
\newcommand{\la}{\lambda}
\newcommand{\La}{\Lambda}
\newcommand{\al}{\alpha}
\newcommand{\om}{\omega}
\newcommand{\lap}{\Delta}
\newcommand{\na}{\nabla}
\newcommand{\f}[2]{\frac{#1}{#2}}
\newcommand{\scal}[2]{\langle #1,#2\rangle}
\newcommand{\deb}{\rightharpoonup}
\newcommand{\norma}[1]{{\left\Vert#1\right\Vert}}
\title{Normalized NLS ground states on a double plane hybrid}
\author[F. Boni]{Filippo Boni}
\address{Scuola Superiore Meridionale, Largo S. Marcellino, 10, 80138, Napoli, Italy}
\email{f.boni@ssmeridionale.it}
\author[R. Carlone]{Raffaele Carlone}
\address{Università degli Studi di Napoli Federico II, Dipartimento di Matematica e Applicazioni ``Renato Caccioppoli”, Via Cintia, Monte S. Angelo, 80126, Napoli, Italy}
\email{raffaele.carlone@unina.it}
\author[I. Di Giorgio]{Ilenia Di Giorgio}
\address{Università degli Studi di Napoli Federico II, Dipartimento di Matematica e Applicazioni ``Renato Caccioppoli”, Via Cintia, Monte S. Angelo, 80126, Napoli, Italy}
\email{ilenia.digiorgio@unina.it}
\date{\today}
\begin{document}
	
	
	\begin{abstract}
We investigate the existence and the properties of normalized ground states of a nonlinear Schr\"odinger equation on a quantum hybrid formed by two planes connected at a point. The nonlinearities are of power type and $L^2$-subcritical, while the matching condition between the two planes generates two point interactions of different strengths on each plane, together with a coupling condition between the two planes. We prove that ground states exist for every value of the mass and two different qualitative situations are possible depending on the matching condition: either ground states concentrate on one of the plane only, or ground states distribute on both the planes and are positive, radially symmetric, decreasing and present a logarithmic singularity at the origin of each plane. Moreover, we discuss how the mass distributes on the two planes and compare the strengths of the logarithmic singularities on the two planes when the parameters of the matching condition and the powers of the nonlinear terms vary.  
	\end{abstract}
	
	\maketitle	
	
	
	
	\section{Introduction}


Quantum hybrids are multidimensional structures on which a quantum dynamics is set: more specifically, different manifolds characterized by different dimensionality are glued together in such a way that a non-trivial dynamics on the whole structure can be defined. The significance of this process of hybridization lies in the specific mathematical technique used to construct such structures that allows us to deal with operators typical of quantum mechanics.

In the present paper, we continue the investigation of stationary solutions of nonlinear Schr\"odinger equations on quantum hybrids started in \cite{ABCT-24,ABCT-24-2}, where a halfline-plane hybrid and a line-plane hybryd were considered (see Figure \ref{fig:ibridiprec}). Here we consider a double plane hybrid, namely a hybrid structure where two identical planes are connected at a single point (see Figure \ref{fig:ibrido-doppiopiano}). The dynamics is described by a nonlinear 
Schr\"odinger equation of the form
\begin{equation}
		\label{1an}
		i\f{\partial}{\partial t}\Psi = H\Psi + g(\Psi), \qquad \,\,\Psi =(z,w):L^2(\Rd)\oplus L^2(\Rd)\to\C^2
	\end{equation}
	where $g:\C^2\to \C^2$ is a nonlinear function given by
\[
g(z,w) = (-\abs{z}^{p_1-2}z,-\abs{w}^{p_2-2}w), \qquad p_1,p_2>2
\]
	and $H$ is a self-adjoint operator on the quantum hybrid (see Appendix \ref{app:operator} for more details). Such a dynamics coincides with the standard NLS dynamics on each plane far from the points where the planes are glued together and allow a connection between the two planes through proper matching conditions that make the operator $H$ a self-adjoint operator.

\begin{figure}[ht]
    \centering
        \includegraphics[width=0.7\textwidth]{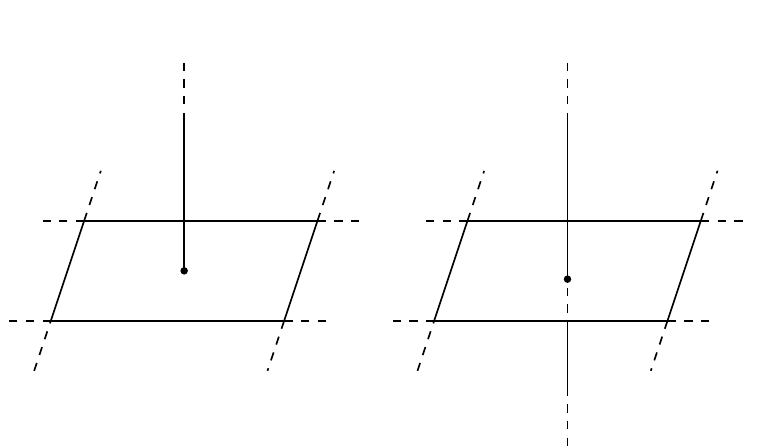}
        \caption{Halfline-plane hybrid (on the left) and line-plane hybrid (on the right).}
        \label{fig:ibridiprec}
        \vspace{1cm}
        \includegraphics[width=0.7\textwidth]{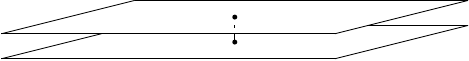}
        \caption{Double-plane hybrid.}
    \label{fig:ibrido-doppiopiano}
\end{figure}

From the physical point of view, the study of various models involving quantum hybrids began in the context of spectroscopy in order to understand the characteristics of the conduction in regions where the geometry presents a singularity of halfline-plane type (see \cite{JvGW-80} and Figure \ref{fig:ibridiprec}).

The mathematical interest in this type of models dates back to the late eighties with the papers \cite{ES-88,ES-87,ES-86}, where the authors considered both hybrids made of a plane and a halfline and hybrids made of two planes connected at a point. The geometry of point contacts is crucial in point-contact spectroscopy since it directly influences the mechanisms of electron transport. When the dimensions of the metallic contact are comparable to or smaller than the electron mean free path, the electron transport becomes ballistic rather than diffusive. In this regime, electrons can traverse the contact without scattering, allowing for a direct study of electron-phonon interactions \cite{JvGW-80}.

Although initially developed for its applications in the latest advancements of solid-state physics, the  technique of point-contact spectroscopy has proven to be useful for the characterization of superconducting materials, and in particular for mapping bosonic and superconducting order parameter spectra through quasiparticle classical and Andreev scattering, respectively \cite{pics}.

Hybrid quantum systems represent a possible model for materials or nanostructures \cite{FDNST} with spatial structure with mixed dimensionality.
The electrical properties of nano contacts affect the controllability, reliability, and efficiency of the  device. Managing current transport in nano contacts is essential for achieving technological advancements \cite{BWZ}. This effectively gives rise to structures that are geometrically similar to hybrids, whose dimensions make quantum effects non-negligible.

From a mathematical point of view, the study of linear and nonlinear PDEs on hybrids represents the generalization and the natural continuation of the well established line of research about linear and nonlinear dynamics on metric graphs. The literature in this field is huge and we provide only some recent results about this topic, especially in the nonlinear setting \cite{ABD,acfn_aihp,ADST,ASTcpde,AST,BMP,BLS,BD22,BCT,CJS,DSTaim,DT22,MNS,PS20}.

\noindent The common and peculiar aspect of the research programs on hybrids and metric graphs lie in the fact that, besides being inspired by physical models, they serve as useful laboratories for the development and application of mathematical techniques. The main difference between metric graphs and hybrids is that in the latter case the definition of self-adjoint operators is more difficult due to the singular character of the fundamental solution of the Laplacian in dimension higher than one, making the nonlinear problem even more challenging. Even though the present paper discusses a specific example of quantum hybrid as done in \cite{ABCT-24,ABCT-24-2}, the long-term goal is to isolate topological and dimensional properties that distinguish different quantum hybrids from the point of view of the existence and stability of stationary solutions.


	\subsection{Setting and main results.}

\noindent 
 The main goal of the paper is to study the existence and the properties of particular solutions of \eqref{1an}, namely ground states of the energy associated with \eqref{1an} subject to the constraint of fixed $L^2$ norm. The precise definition of the energy and of ground states is not straightforward and requires to be introduced step by step.

Let us start recalling the definition of the standard NLS energy on the plane, that is
\begin{equation}
			\label{uf}
			E_p(u):=\frac{1}{2} \int_{\R^2}\abs{\nabla u}^2 \,dx -\frac{1}{p}\int_{\R^2}\abs{u}^{p} \, dx,\,\qquad u\in H^1(\R^2),
\end{equation}
and suppose that the energy on the hybrid presents an interaction between the two planes that is concentrated only at the junction point: this interaction produces a point interaction on each plane together with a coupling term between the two planes. 

The point interaction on each plane can be defined through the theory of self-adjoint extensions of hermitian operators introduced by Krein and von Neumann (see \cite{RSII-80}) and produces a peculiar decomposition of the energy domain $\D$, namely every function $u\in \D$ decomposes as the sum of a \emph{regular part} $\phi\in H^1(\Rd)$ and a \emph{singular part} $q\G_1$, where $q\in\C$ is usually called the \emph{charge} and $\G_1$ is the Green's function of $-\lap+1$: more specifically, $\D$ reads as
\[
\D:= \{u\in L^2(\R^2):\exists\, q\in\C\,:\, u-q\G_1=:\phi\in H^1(\R^2)\}.
\]

The NLS energy with point interaction on the plane is thus defined as the functional $F_{p,\sigma}:\D\to \R$ acting as
\begin{equation}
\label{eq:F-psigma}
\begin{split}
F_{p,\sigma}(u):=&\frac{1}{2}Q_\sigma(u)-\frac{1}{p}\|u\|_{L^p(\Rd)}^p\\
=&\frac{1}{2}\left(\|\phi\|_{H^1(\Rd)}^2-\|u\|_{L^2(\Rd)}^2\right)+\frac{|q|^2}{2}\left(\sigma+\frac{\gamma-\log(2)}{2\pi}\right)-\frac{1}{p}\|u\|_{L^p(\Rd)}^p,
\end{split}
\end{equation}
with $\gamma$ being the Euler-Mascheroni constant. The parameter $\sigma$, later referred as the parameter of the point interaction, is obtained by applying the technique of self-adjoint extension of symmetric operators to the Laplace operator $-\Delta$ on $C^\infty_c(\Rd\setminus\{0\})$ (see the monograph \cite{AGHKH-88} for more details). Let us observe that the value of the parameter $\sigma$ for which the standard NLS energy is recovered is formally given by $\sigma=+\infty$: indeed, this choice forces the charge $q$ to be zero, so that the domain $\D$ reduces to $H^1(\Rd)$.  The existence and some stability properties of the ground states of the energy \eqref{eq:F-psigma} have been studied in \cite{ABCT,FGI-22} and, in presence of a nonlocal nonlinearity, in \cite{GMS}. Moreover, well-posedness, scattering properties and blow-up of solutions of the associated nonlinear Schr\"odinger equation have been investigated in \cite{CFN, CFN-22,FN-22}.   

Given $p_i>2$, $\sigma_i\in \R$ for $i=1,2$ and $\beta\geq 0$, and denoted by $U=(u_1,u_2)\in \La:=\D\times\D$ and by $q_i:=\lim_{|x|\to 0}\frac{u_i(x)}{\G_1(x)}$ the charges associated with $u_i$ for $i=1,2$, one can define the NLS energy on the double-plane hybrid as the functional $F: \Lambda\to \R$ acting as
\begin{equation}
\label{eq:F}
F(U):=\sum_{i=1,2}F_{p_i,\sigma_i}(u_i)-\beta \Re\{q_1\overline{q_2}\}.
 \end{equation}

 As one can notice, the coupling term is chosen to be quadratic as function of the charges $q_i$: by this choice, the operator $H$ associated with the quadratic part of the energy turns out to be self-adjoint (see Appendix \ref{app:operator}). Let us observe that, denoted with
 \begin{equation}
 \label{eq:quad-form}
     Q(U):=\sum_{i=1,2}Q_{\sigma_i}(u_i)-2\beta \Re\{q_1\overline{q_2}\},
 \end{equation}
 then the energy \eqref{eq:F} can be rewritten also as
 \begin{equation}
     F(U)=\frac{1}{2}Q(U)-\sum_{i=1,2}\frac{1}{p_i}\|u_i\|_{L^{p_i}(\Rd)}^{p_i}.
 \end{equation}
 Let us now give the definition of ground state. 

\begin{definition}(Ground state). 
		For every fixed $\mu>0$, we call \textit{ground state} of $F$ at mass $\mu$ any function $U$ belonging to
  \begin{equation*}
      \La_{\mu}:=\left\{(u_1,u_2)\in\La\,:\,\|u_1\|_{L^2(\R^2)}^2+\|u_2\|_{L^2(\R^2)}^2=\mu\right\}
  \end{equation*} 
  such that
		\[
		F(U)=\F(\mu):= \inf_{U\in\La_{\mu}} F(U)
		\]
		In particular, $\F(\mu)$ is called \textit{ground state energy level} of $F$ at mass $\mu$. 
	\end{definition}

Let us highlight that ground states $U$ of \eqref{eq:F} at mass $\mu$ solve for some $\omega\in \R$ the system
\begin{equation}
	\begin{cases}
		\label{eq:stat}
		H\,U + g(U) + \omega U =0\\
		U\in D(H).
\end{cases}\end{equation}
See Appendix \ref{app:operator} for a more explicit expression of \eqref{eq:stat}.
In particular, given a solution of \eqref{eq:stat}, one can construct \emph{standing waves} solutions of \eqref{1an}, i.e. solutions of the the form $\Psi(t,x,y)=e^{i\omega\, t}\,U(x,y)$  with $x,y\in\R^2$.

Summing up, we investigate the following problem:

\begin{problem}
    Given $p_i\in (2,4)$, $\sigma_i\in \R$ for $i=1,2$, $\beta\geq 0$ and $\mu>0$, study the existence and qualitative properties of ground states at mass $\mu$ of the energy \eqref{eq:F}, namely minimizers of
    \begin{equation}
    \begin{split}
        F(U)&=\sum_{i=1,2}F_{p_i,\sigma_i}(u_i)-\beta\Re\{q_1\overline{q_2}\}\\
        &=\sum_{i=1,2}\biggl\{\frac{1}{2}Q_{\sigma_i}(u_i)-\frac{1}{p_i}\|u_i\|_{L^{p_i}}^{p_i}\biggl\}-\beta \Re\{q_1\overline{q_2}\}
    \end{split}
    \end{equation}
    among all the functions belonging to $\Lambda_\mu$.
\end{problem}

Let us highlight that the limitations on the values of the powers $p_i$ for $i=1,2$ reduce the study to the $L^2$-subcritical case, namely the case where the energy $F$ is bounded from below for every value of the mass $\mu>0$: this can be proved in general by applying the so-called Gagliardo-Nirenberg type inequalities (see Subsection \ref{subsec:GN}).

In order to ease the notation, in the following we will often use $F_i$ in place of $F_{p_i,\sigma_i}$. Moreover, we will denote the ground state energy levels of $E_p$ and $F_i$ at mass $\mu$ respectively as
\begin{equation}
    \Eps_p(\mu):=\inf_{v\in H^1_\mu(\Rd)} E_p(v)
\end{equation}
and
\begin{equation}
\F_i(\mu):=\inf_{v\in\D_\mu} F_i(v),\quad i=1,2,
\end{equation}
where the subscript $\mu$ in $H^1(\Rd)$ and $\D$ denotes that the additional constraint $\|v\|_{L^2(\Rd)}^2=\mu$ holds. 
\noindent
We present now the main results of the paper. We recall that the parameters in \eqref{eq:F} are chosen as follows when not specified: $p_i\in (2,4)$, $\sigma_i\in \R$ for $i=1,2$, and $\beta\geq 0$.

\noindent
The first result concerns the existence of ground states at fixed mass.

	\begin{theorem}[Existence of ground states]
        \label{thm:existence}
		For every $\mu>0$, there exists at least a ground state of \eqref{eq:F} at mass $\mu$.
	\end{theorem}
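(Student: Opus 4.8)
The plan is to apply the direct method of the calculus of variations on the constraint manifold $\La_\mu$, the only genuine difficulty being the recovery of compactness of minimizing sequences, which I would obtain through a concentration--compactness analysis combined with a strict binding inequality. First I would check that $F$ is bounded from below on $\La_\mu$, so that $\F(\mu)>-\infty$. Writing each $u_i=\phi_i+q_i\G_1$, the only negative contributions to $F$ are the charge terms $\tfrac{|q_i|^2}{2}(\sigma_i+\tfrac{\gamma-\log 2}{2\pi})$, the coupling $-\beta\Re\{q_1\ov{q_2}\}$ and the nonlinear terms $-\tfrac1{p_i}\|u_i\|_{L^{p_i}}^{p_i}$. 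The coupling is absorbed using $\beta\ge 0$ and Young's inequality, the charges $|q_i|$ are controlled by $\|\phi_i\|_{L^2}$ and $\mu$ through the constraint (since $\G_1\in L^2(\Rd)$), and the nonlinear terms are estimated by a Gagliardo--Nirenberg inequality adapted to $\D$; because $p_i\in(2,4)$ the kinetic term $\tfrac12\|\phi_i\|_{H^1}^2$ dominates, yielding coercivity. The same estimates show that any minimizing sequence $U_n=(u_{1,n},u_{2,n})$ is bounded, i.e. $\|\phi_{i,n}\|_{H^1}$ and $|q_{i,n}|$ are uniformly bounded. I would also record the upper bound $\F(\mu)\le\min_i\Eps_{p_i}(\mu)<0$, obtained by testing with a free NLS ground state placed on a single plane with zero charge (so that $F$ reduces to $E_{p_i}$).

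Next I would extract weak limits: $q_{i,n}\to q_i$ in $\C$ (a finite--dimensional convergence) and $\phi_{i,n}\deb\phi_i$ in $H^1(\Rd)$, so that $U_n\deb U=(u_1,u_2)\in\La$ with charges $q_i$. The quadratic form behaves well under this convergence, the $H^1$ norms of the regular parts being weakly lower semicontinuous and the charge and coupling terms continuous in $\C^2$, so the whole analysis reduces to controlling the $L^2$ mass and the $L^{p_i}$ norms, i.e. to excluding loss of mass at infinity.

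The heart of the proof, and the main obstacle, is this compactness issue. Although the point interactions break translation invariance, far from the junction the operator $H$ reduces to the free Laplacian, so a portion of mass escaping to infinity on plane $i$ behaves like a free NLS soliton and contributes the level $\Eps_{p_i}$. To rule this out I would prove the strict binding inequality
\begin{equation*}
\F(\mu)<\F(\mu-\delta)+\Eps^\infty(\delta),\qquad \delta\in(0,\mu],
\end{equation*}
where $\Eps^\infty(\delta):=\inf_{\delta_1+\delta_2=\delta}\{\Eps_{p_1}(\delta_1)+\Eps_{p_2}(\delta_2)\}$ is the energy of the problem at infinity. The driving mechanism is the scaling identity $\Eps_{p}(\mu)=\mu^{2/(4-p)}\Eps_{p}(1)$, whose exponent exceeds $1$ for $p\in(2,4)$ and hence yields strict subadditivity of the free levels; the strict inequality then follows from a test--function and scaling argument showing that it is strictly more convenient to keep all the mass concentrated at the junction than to send a fraction of it to infinity. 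Establishing this strict inequality in full rigour, uniformly in the sign of the $\sigma_i$ and in $\beta$, is the delicate point.

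Granted the binding inequality, a standard concentration--compactness argument excludes both vanishing (incompatible with $\F(\mu)<0$) and dichotomy, forcing $\|u_{i,n}\|_{L^2}^2\to\|u_i\|_{L^2}^2$ with $\|u_1\|_{L^2}^2+\|u_2\|_{L^2}^2=\mu$, so that $U\in\La_\mu$. Conservation of mass upgrades the weak convergence of the regular parts to strong convergence in $L^2(\Rd)$, and, together with the $H^1$ bound and Gagliardo--Nirenberg, to strong convergence in $L^{p_i}(\Rd)$. Weak lower semicontinuity of the remaining quadratic part of $F$ then gives $F(U)\le\liminf_n F(U_n)=\F(\mu)$, whence $F(U)=\F(\mu)$ and $U$ is the desired ground state.
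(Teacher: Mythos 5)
Your overall skeleton (direct method, comparison with the problem at infinity, exclusion of vanishing and dichotomy, Brezis--Lieb plus Gagliardo--Nirenberg to close) matches the paper's, but two of your steps fail as stated. The first is the coercivity argument. You freeze the decomposition $u_i=\phi_i+q_i\G_1$ and claim that controlling $|q_i|$ by $\norma{\phi_i}_{L^2}$ and $\mu$ through the constraint lets the kinetic term dominate. This does not close: the constraint only gives $\bigl|\norma{\phi_i}_{L^2}-|q_i|\norma{\G_1}_{L^2}\bigr|\leq\sqrt{\mu}$, so feeding $|q_i|^2\lesssim \mu+\norma{\phi_i}_{L^2}^2$ back into the charge terms $\tfrac{|q_i|^2}{2}\bigl(\sigma_i+\tfrac{\gamma-\log 2}{2\pi}\bigr)$ (which are \emph{negative} when $\sigma_i$ is sufficiently negative) and into the coupling $-\beta|q_1||q_2|$ produces a term $-C\norma{\phi_i}_{L^2}^2$ whose constant $C$ depends on $\sigma_i$ and $\beta$ and can exceed the coefficient $1/2$ available in $\tfrac12\norma{\phi_i}_{H^1}^2$; the gradient part cannot absorb an $L^2$ deficit (there is no Poincar\'e inequality on $\Rd$), and Gagliardo--Nirenberg only handles the $L^{p_i}$ terms, which are not the problem here. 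The paper's proof hinges precisely on avoiding this: it re-expands $u_i=\phi_{i,\lambda}+q_i\G_\lambda$ with $\lambda$ chosen so large that $\min_i\sigma_i+\theta_\lambda=2\beta$, which makes the coefficient of $|q_i|^2$ nonnegative and big enough to absorb the coupling by Young's inequality, while the price $-\lambda\norma{u_i}_{L^2}^2\geq-\lambda\mu$ is a harmless constant on the constraint. With $\lambda=1$ fixed, boundedness of minimizing sequences is not obtained by your estimates once $\sigma_i+\tfrac{\gamma-\log 2}{2\pi}<0$ or $\beta$ is large.

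The second, and more substantial, gap is the mechanism you propose for the binding inequality. Its crucial case is $\delta=\mu$, where it reads $\F(\mu)<\Eps^\infty(\mu)=\F^\infty(\mu)$: this is exactly what forbids the \emph{whole} mass from escaping to infinity (the case $m=0$ in the paper). That strict inequality cannot be driven by the super-linear scaling $\Eps_p(\mu)=\mu^{2/(4-p)}\Eps_p(1)$ of the free levels, because that scaling compares free levels with free levels and says nothing about the gain of the hybrid over the free problem; indeed your own test functions (zero charge, free soliton on one plane) only ever give the non-strict bound $\F(\mu)\leq\min_i\Eps_{p_i}(\mu)$. The strictness comes from the attractiveness of the point interaction: one needs $\F_i(\mu)<\Eps_{p_i}(\mu)$, a nontrivial single-plane fact imported from \cite{ABCT,ABCT-24}, combined with the strict concavity of $t\mapsto\F_1(t)+\F_2(\mu-t)$, which is how the paper proves $\F(\mu)\leq\min\{\F_1(\mu),\F_2(\mu)\}<\F^\infty(\mu)$ in its Step 2. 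Without this ingredient your concentration--compactness scheme cannot rule out total escape, and no amount of rigour in the "test-function and scaling argument" you sketch will produce it. It is also worth noting that, once $\F(\mu)<\F^\infty(\mu)$ is in hand, the full binding inequality for $0<\delta<\mu$ is unnecessary: the paper excludes dichotomy directly on the minimizing sequence through the elementary scaling inequality $F(\sqrt{\theta}V)<\theta F(V)$ for $\theta>1$ together with Brezis--Lieb splitting, which is considerably cheaper than establishing $\F(\mu)<\F(\mu-\delta)+\Eps^\infty(\delta)$ for every $\delta$.
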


Differently from the case of other hybrids considered \cite{ABCT-24,ABCT-24-2}, ground states exist for every value of the mass when two planes are connected at a single point. This is due to the fact that point interactions on the plane are always attractive, namely the associated self-adjoint operators on the plane have always a negative eigenvalue, and this makes more convenient to concentrate around the point of contact rather than escaping at infinity.

 In the second result, we provide a characterization of ground states at fixed mass both when $\beta=0$ and $\beta>0$. These two cases correspond to the decoupled and coupled case respectively and are coherent with what happens in the linear setting \cite{ES-86}, in the sense that the actual hybridization of the two planes happen when the coupling parameter $\beta$ is different from zero.
 
 \begin{theorem}
		\label{thm:charact}
		Let $\mu>0$. If $\beta=0$, then
		\begin{equation}
			\label{eq:Fmu=F12mu}
			\F(\mu)=\min \left\{\F_{1}(\mu), \F_{2}(\mu)\right\}.
		\end{equation}
	Moreover, every ground state $U$ is of the form 
    \begin{enumerate}
\item[i)] $U=(u_1,0)$, \quad if $\F_{{1}}(\mu)<\F_{2}(\mu)$;
\item[ii)] $U=(0,u_2)$,\quad if $\F_{{1}}(\mu)>\F_{2}(\mu)$;
\item[iii)] either $U=(u_1,0)$ or $U=(0,u_2)$, if $\F_{{1}}(\mu)=\F_{2}(\mu)$,
	\end{enumerate}
    and the non-zero component of $U$ is positive, radially symmetric and decreasing along the radial direction and has positive charge, up to a multiplication by a phase factor.
    
If instead $\beta>0$, then 
\begin{equation}
\label{eq:Fmu<F12mu}
    \F(\mu)<\min\left\{\F_{1}(\mu)\,,\,\F_{2}(\mu)\right\}.
\end{equation}
In particular, given $U=(u_1,u_2)$ a ground state of $F$ at mass $\mu$ and $q_i$ be the charge associated with $u_i$, then, up to a multiplication by a phase factor, $q_i>0$ and $u_i$ is positive, radially symmetric and decreasing along the radial direction for $i=1,2$. Furthermore, denoted by $\omega$ the Lagrange multiplier associated with $U$, by $\G_\omega$ the Green's function of $-\lap+\omega$ and by $\phi_{i,\omega}:=u_i-q_i\G_\omega$ for $i=1,2$, then also $\phi_{i,\omega}$ is positive, radially symmetric and decreasing along the radial direction. 
	\end{theorem}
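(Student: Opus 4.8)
The plan is to treat the decoupled case $\be=0$ and the coupled case $\be>0$ separately: the former by a concavity argument, the latter by a test-function computation followed by rearrangement. Throughout I write $\mu_i:=\|u_i\|_{L^2(\Rd)}^2$ for the masses of the two components.

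When $\be=0$ the functional splits as $F(U)=F_1(u_1)+F_2(u_2)$, so minimizing over $\La_\mu$ reduces to optimizing the mass split, $\F(\mu)=\inf_{\mu_1+\mu_2=\mu}\big(\F_1(\mu_1)+\F_2(\mu_2)\big)$, and the bound $\F(\mu)\le\min\{\F_1(\mu),\F_2(\mu)\}$ is immediate upon placing all the mass on one plane (recall $\F_i(0)=0$). The key observation I would use for the reverse inequality and the rigidity (i)--(iii) is that each $\F_i$ is \emph{strictly concave}. Indeed, writing every $u\in\D_\mu$ as $\sqrt\mu\,v$ with $v\in\D_1$ and using that $Q_{\si_i}$ is $2$-homogeneous, one gets
\[
\F_i(\mu)=\inf_{v\in\D_1}h_v(\mu),\qquad h_v(\mu):=\tf{\mu}{2}Q_{\si_i}(v)-\tf{\mu^{p_i/2}}{p_i}\|v\|_{L^{p_i}(\Rd)}^{p_i},
\]
and since $p_i/2\in(1,2)$ each $h_v$ is affine minus a positive multiple of $\mu^{p_i/2}$, hence strictly concave in $\mu$; an infimum of concave functions is concave. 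Strictness follows because a single-plane ground state exists at every mass \cite{ABCT}: at such a mass $h_v$ is strictly concave, lies above $\F_i$ and meets it there, which is incompatible with $\F_i$ being affine near that point. Granting this, $\nu\mapsto\F_1(\mu-\nu)+\F_2(\nu)$ is strictly concave on $[0,\mu]$, so its minimum is attained only at the endpoints; this yields \eqref{eq:Fmu=F12mu} and forces any ground state to have a vanishing component, the three cases being distinguished exactly by the sign of $\F_1(\mu)-\F_2(\mu)$.

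For $\be>0$ I would first prove \eqref{eq:Fmu<F12mu} by a competitor that activates the coupling. Assume $\min\{\F_1(\mu),\F_2(\mu)\}=\F_1(\mu)$ and let $w$ be a single-plane ground state at mass $\mu$, whose charge may be taken real positive (it is nonzero since the point interaction is attractive). Test $F$ with $U_\ep=\big(\sqrt{1-\ep/\mu}\,w,\,c_\ep\G_1\big)$, where $c_\ep=\sqrt{\ep}/\|\G_1\|_{L^2(\Rd)}>0$ makes the second component have mass $\ep$ (note $\G_1\in L^2(\Rd)$, its logarithmic singularity being square integrable in dimension two). A direct expansion gives $F_1(\sqrt{1-\ep/\mu}\,w)=\F_1(\mu)+O(\ep)$ and $F_2(c_\ep\G_1)=O(\ep)$, while the coupling contributes $-\be\sqrt{1-\ep/\mu}\,q_1(w)\,c_\ep=-C\sqrt\ep+O(\ep^{3/2})$ with $C>0$. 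Since the coupling gain is of order $\sqrt\ep$ and dominates the $O(\ep)$ cost of moving mass, $F(U_\ep)<\F_1(\mu)$ for $\ep$ small, proving \eqref{eq:Fmu<F12mu}. Now let $U=(u_1,u_2)$ be a ground state. Both charges are nonzero: if, say, $q_1=0$ then $u_1\in H^1(\Rd)$, the coupling vanishes and $F(U)=F_1(u_1)+F_2(u_2)\ge\F_1(\mu_1)+\F_2(\mu_2)\ge\min\{\F_1(\mu),\F_2(\mu)\}$ by the concavity above, contradicting \eqref{eq:Fmu<F12mu}. Writing $q_j=r_je^{i\al_j}$, rotating $u_2\mapsto e^{i\delta}u_2$ leaves $Q_{\si_2}$ and the $L^{p_2},L^2$ norms invariant and changes the coupling to $-\be\,r_1r_2\cos(\al_1-\al_2-\delta)$, so minimality forces $\al_1=\al_2$; a global phase then makes $q_1,q_2>0$ and the coupling equal to $-\be q_1q_2<0$.

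It remains to establish positivity, radial symmetry and monotonicity. From \eqref{eq:stat}, componentwise $u_i$ solves $-\lap u_i+\om u_i=|u_i|^{p_i-2}u_i$ away from the origin for some $\om>0$, and since $(-\lap+\om)\G_\om=\delta_0$ the singular parts cancel, giving $(-\lap+\om)\phi_{i,\om}=|u_i|^{p_i-2}u_i$ on all of $\Rd$, i.e.\ $\phi_{i,\om}=\G_\om*\big(|u_i|^{p_i-2}u_i\big)$. Once $u_i\ge0$, positivity of the kernel $\G_\om>0$ yields $\phi_{i,\om}\ge0$, whence $u_i=q_i\G_\om+\phi_{i,\om}>0$; and if moreover $u_i$ is radially decreasing, then $|u_i|^{p_i-2}u_i$ is too, so its convolution with the positive radially decreasing $\G_\om$ makes $\phi_{i,\om}$ positive, radial and decreasing. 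The reduction to $u_i\ge0$ and the radial monotonicity I would obtain by symmetric decreasing rearrangement: the mass, the $L^{p_i}$ norm and the charge are preserved, the last because rearrangement preserves the distribution function and hence the rate of the logarithmic blow-up at the origin that defines $q_i$. I expect the \emph{main obstacle} to be precisely the control of the remaining term, the Dirichlet energy of the regular part inside $Q_{\si_i}$: because $u_i\notin H^1(\Rd)$ (its gradient fails to be square integrable at the singularity), one cannot apply P\'olya--Szeg\H{o} to $u_i$ directly and must instead prove a rearrangement inequality for the point-interaction quadratic form at fixed charge, in the spirit of the single-plane analysis of \cite{ABCT,FGI-22}. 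Granting that $Q_{\si_i}$ does not increase (the coupling being unchanged since the charges are preserved), the rearranged configuration is again a ground state, and the strict form of the rearrangement inequality together with the strong maximum principle upgrades this to the statement that \emph{every} ground state is, up to a global phase, positive, radially symmetric and strictly decreasing, with the same properties inherited by $\phi_{i,\om}$.
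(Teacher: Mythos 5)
Your treatment of the decoupled case $\beta=0$ is correct and is essentially the paper's own argument: the paper likewise reduces to $\inf_{t\in[0,\mu]}\{\F_1(t)+\F_2(\mu-t)\}$ and invokes strict concavity of $\F_1,\F_2$ (citing \cite[Lemma 4.2]{ABCT-24}); your self-contained proof of strict concavity via the functions $h_v$ is a nice addition, but the logic is the same. For the strict inequality \eqref{eq:Fmu<F12mu} you take a genuinely different route: the paper argues by contradiction (if equality held, a one-plane ground state $(u_1,0)$ would be a ground state of $F$, which is excluded by Proposition \ref{prop:qi>0}, whose proof uses the Euler--Lagrange system \eqref{eq:stat-detailed} and the bound $\F(\mu)<\F^{\infty}(\mu)$), whereas you exhibit the explicit competitor $U_\ep=(\sqrt{1-\ep/\mu}\,w,\,c_\ep\G_1)$ and win because the coupling gain $-C\sqrt{\ep}$ beats the $O(\ep)$ cost. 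Your computation is sound; it needs that one-plane ground states exist and have non-vanishing charge, both available from \cite{ABCT} (non-vanishing charge follows from $\F_i(\mu)<\Eps_{p_i}(\mu)$), and it is arguably more quantitative than the paper's argument. Your proof that both charges of a ground state are nonzero, and the phase-alignment step, also match the paper (Proposition \ref{prop:qi>0}, Steps 2 and 4).

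The genuine gap is in the qualitative part (positivity, radial symmetry, monotonicity). You propose to rearrange $u_i$ itself, and you yourself flag the missing ingredient: a P\'olya--Szeg\H{o}-type inequality for the point-interaction quadratic form at fixed charge, which you then ``grant''. This cannot be granted: it is precisely the crux of the matter, and it is not even clear how to formulate it soundly. Indeed $u_i\notin H^1(\Rd)$, the Dirichlet energy entering $Q_{\sigma_i}$ is that of the regular part $\phi_{i,\omega}=u_i-q_i\G_\omega$, and for the rearranged function one would have to show both that $u_i^\star-q_i\G_\omega$ lies in $H^1(\Rd)$ and that the charge is preserved (your distribution-function argument controls the logarithmic blow-up rate only heuristically, not in the sense required by the definition of $\D$). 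The paper's proof is engineered exactly to avoid this obstacle: ground states are first converted into action minimizers (Lemma \ref{lem:colleg}), then into minimizers of $A_\omega$ (resp. $B_\omega$) on the set $\Pi_\omega$ where the $L^{p_i}$ norm is held fixed (Lemma \ref{lem:equiv-Aom}); on that constrained problem one rearranges \emph{only} the regular part $\phi_{i,\omega}\in H^1(\Rd)$, keeping the singular part $q_i\G_\omega$ fixed, so that classical P\'olya--Szeg\H{o} applies, and the two-function inequality \eqref{eq:prop-rearr} (with its equality characterization when the fixed summand is radially decreasing) combined with a scaling $\tau\in(0,1)$ restoring the constraint yields the contradiction. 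Positivity is obtained by the same mechanism (replacing $\phi_{1,\omega}$ by $\tau\abs{\phi_{1,\omega}}$, with a careful intermediate-value argument in the degenerate case $\alpha\equiv\pi$ that uses \eqref{eq:stat-detailed}), followed by the strong maximum principle; only afterwards does one conclude for $u_i$. Your convolution observation $\phi_{i,\omega}=\G_\omega\ast(\abs{u_i}^{p_i-2}u_i)$ is correct and would indeed transfer positivity and radial monotonicity from $u_i$ to $\phi_{i,\omega}$, but it cannot start the argument, since the properties of $u_i$ are exactly what is at stake. Until the fixed-charge rearrangement inequality is supplied --- which is essentially as hard as the paper's entire $A_\omega$/$B_\omega$ machinery --- this part of your proposal is incomplete.
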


 \begin{remark}
      As explained in Subsection \ref{subsec:domain}, the decomposition consider in Theorem \ref{thm:charact}, i.e   $u_i=\phi_{i,\omega}+q_i\G_\omega$, is another possible decomposition for $u_i$, that differs from the one presented in the Introduction from the fact that the singular term is given by the Green's function of $-\lap+\omega$ instead of the Green's function of $-\lap+1$. 
\end{remark}

In the next result, we investigate the distribution of the charges and of the mass on the hybrid when the parameter of the point interaction on the first plane is lower than the same parameter on the second plane and  the power nonlinearities are the same on each plane. 

	\begin{theorem}
		\label{thm:p1=p2}
	Let $\sigma_1<\sigma_2$, $\beta>0$ and $p_1=p_2$ and let $U=(u_1,u_2)$ be a ground state of \eqref{eq:F} at mass $\mu>0$. For $i=1,2$, denote by $q_i$ the charge associated with $u_i$ and by $\mu_i$ the mass of $u_i$. Therefore, it holds that
	\begin{enumerate}
\item[i)] $q_2<q_1$;
\item[ii)] for any $\sigma_1\in \R$ and $\sigma_2\to+\infty$, it holds that 
\begin{equation}
    \|u_1\|_{L^2(\Rd)}^2\to \mu\quad \text{and} \quad F(U)\to \F_1(\mu); 
\end{equation}

	\end{enumerate} 
	\end{theorem}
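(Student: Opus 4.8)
The plan is to treat the two items separately, using throughout the characterization from Theorem \ref{thm:charact}: after fixing phases, $q_1,q_2>0$ and each $u_i$ is a positive, radially symmetric, decreasing solution of $-\lap u_i+\om u_i=u_i^{p-1}$ away from the origin, with the \emph{same} Lagrange multiplier $\om$ and the same power $p=p_1=p_2$. For item (i) the main device is a swapping inequality. Since $\wt U:=(u_2,u_1)$ has the same total mass, $\wt U\in\La_\mu$. Writing $F_{p,\sigma}(u)=A(u)+\tf12\abs{q}^2\sigma$, where $A(u)$ collects the terms of $F_{p,\sigma}(u)$ that do not depend on $\sigma$ (and which, as $p_1=p_2$, is the \emph{same} functional on both planes), and using that the coupling $-\be\,\Re\{q_1\ov{q_2}\}=-\be\, q_1q_2$ is symmetric, one finds
\[
F(\wt U)-F(U)=\tf12(\sigma_1-\sigma_2)\,(q_2^2-q_1^2).
\]
By minimality $F(\wt U)\ge F(U)$, and since $\sigma_1<\sigma_2$ this forces $q_2^2\le q_1^2$, i.e. $q_2\le q_1$.

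To exclude the equality $q_1=q_2$, I would use the matching conditions encoded in $D(H)$ (see \eqref{eq:stat} and Appendix \ref{app:operator}): in the decomposition $u_i=\phi_{i,\om}+q_i\G_\om$ they take the coupled form
\[
\phi_{1,\om}(0)=(\sigma_1+c_\om)\,q_1-\be\,q_2,\qquad \phi_{2,\om}(0)=(\sigma_2+c_\om)\,q_2-\be\,q_1,
\]
with a constant $c_\om$ depending only on $\om$, the same on both planes. If $q_1=q_2=:q>0$, then $u_1$ and $u_2$ solve the same equation with the same charge and the same $\om$; by uniqueness of the positive radially decreasing solution with prescribed logarithmic singularity, $u_1=u_2$, hence $\phi_{1,\om}(0)=\phi_{2,\om}(0)$. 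Subtracting the two matching conditions yields $(\sigma_1-\sigma_2)q=0$, contradicting $\sigma_1<\sigma_2$ and $q>0$. Therefore $q_2<q_1$.

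For item (ii) I would view $U=U^{(\sigma_2)}$ as a family of ground states with $\sigma_1,\be,p,\mu$ fixed and $\sigma_2\to+\infty$, and set $\mu_i=\norm{u_i}_{L^2(\Rd)}^2$. Testing $F$ with $(v_1,0)$, where $v_1$ minimizes $\F_1(\mu)$, gives the uniform upper bound $\F(\mu)=F(U)\le\F_1(\mu)$. From the coercivity estimates used to prove Theorem \ref{thm:existence} — which only improve as $\sigma_2$ grows, the term $\tf12 q_2^2\sigma_2$ being positive — one obtains bounds on $q_1,q_2,\norm{\phi_{1,\om}}_{H^1(\Rd)},\norm{\phi_{2,\om}}_{H^1(\Rd)}$ that are uniform in $\sigma_2$. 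Plugging these into
\[
\tf12 q_2^2\sigma_2\le \F_1(\mu)-F_1(u_1)-A(u_2)+\be\, q_1q_2,
\]
whose right-hand side is bounded, gives $q_2^2=O(1/\sigma_2)$, so $q_2\to0$ and $\be\, q_1q_2\to0$. Writing $u_2=\phi_2+q_2\G_1$ with $\phi_2\in H^1(\Rd)$ bounded, the smallness of $q_2$ yields $F_2(u_2)\ge E_p(\phi_2)-o(1)\ge\Eps_p(\norm{\phi_2}_{L^2(\Rd)}^2)-o(1)$, while $\norm{\phi_2}_{L^2(\Rd)}^2=\mu_2+o(1)$.

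Passing to a subsequence with $\mu_1\to\bar\mu_1$, $\mu_2\to\bar\mu_2$, $\bar\mu_1+\bar\mu_2=\mu$, and combining $F_1(u_1)\ge\F_1(\mu_1)$ with the previous lower bound, I would reach
\[
\liminf_{\sigma_2\to+\infty}\F(\mu)\ge\F_1(\bar\mu_1)+\Eps_p(\bar\mu_2).
\]
At this point I would invoke two facts: that $\Eps_p(\nu)\ge\F_1(\nu)$ for every $\nu\ge0$ (charge-zero functions are admissible for $\F_1$, and on them $F_1=E_p$), and the strict subadditivity of $\nu\mapsto\F_1(\nu)$ (as established for the existence theory). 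If $\bar\mu_2>0$ these give $\F_1(\bar\mu_1)+\Eps_p(\bar\mu_2)\ge\F_1(\bar\mu_1)+\F_1(\bar\mu_2)>\F_1(\mu)$, contradicting $\F(\mu)\le\F_1(\mu)$. Hence $\bar\mu_2=0$ along every subsequence, so $\mu_1\to\mu$, and moreover $\liminf\F(\mu)\ge\F_1(\mu)\ge\limsup\F(\mu)$, whence $F(U)=\F(\mu)\to\F_1(\mu)$. I expect the main obstacle to lie precisely in this last paragraph: securing the uniform-in-$\sigma_2$ a priori bounds together with the strict subadditivity of $\F_1$ and the comparison $\Eps_p\ge\F_1$, which are exactly the ingredients governing the competition between binding the whole mass on the strongly attractive plane and splitting off a free soliton on the vanishingly attractive one.
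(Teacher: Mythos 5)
Your part (i) opens exactly as the paper does: the swap $\widetilde{U}=(u_2,u_1)\in\La_\mu$ and the identity $F(\widetilde{U})-F(U)=\tfrac{1}{2}(\sigma_2-\sigma_1)(q_1^2-q_2^2)$ give $q_2\leq q_1$ by minimality. The genuine gap is in how you exclude $q_1=q_2$: you invoke ``uniqueness of the positive radially decreasing solution with prescribed logarithmic singularity'' to conclude $u_1=u_2$. No such uniqueness theorem is proved in the paper or available in its toolbox; uniqueness for radial solutions of $-\lap u+\om u=u^{p-1}$ carrying a prescribed charge $q$ at the origin is a delicate singular ODE/shooting problem that Kwong-type arguments do not cover, so this step is unjustified as stated. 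It is also unnecessary. The paper's own argument observes that when $q_1=q_2=:q$ the identity above gives $F(\widetilde{U})=F(U)$, so $\widetilde{U}$ is itself a ground state (with the same Lagrange multiplier, since $Q(\widetilde{U})=Q(U)$ and the $L^{p}$ norms are unchanged when $p_1=p_2$), hence $\widetilde{U}$ also solves the stationary system \eqref{eq:stat-detailed}. The matching conditions for $U$ give $\phi_{1,\om}(0)=(\sigma_1+\theta_{\om}-\beta)q$, while those for $\widetilde{U}$ give $\phi_{1,\om}(0)=(\sigma_2+\theta_{\om}-\beta)q$; subtracting yields $(\sigma_1-\sigma_2)q=0$, contradicting $\sigma_1<\sigma_2$ and $q>0$, without ever comparing $u_1$ to $u_2$. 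You should replace your uniqueness step by this observation.

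Part (ii) is essentially the paper's proof and is sound: uniform-in-$\sigma_2$ coercivity (the paper fixes $\la$ so that $\sigma_1+\theta_\la=2\beta$, making Step 1 of Theorem \ref{thm:existence} uniform in $\sigma_2\geq\sigma_1$), the decay $q_2^2=O(1/\sigma_2)$, the lower bound $F_2(u_2)\geq E_p(\phi_2)-o(1)$, and the concavity/subadditivity dichotomy forcing $\mu_1\to\mu$. Two remarks. First, your way of getting the lower bound, keeping only the positivity of $\tfrac{\sigma_2}{2}q_2^2$ rather than proving $\sigma_2q_2^2\to 0$, is in fact a mild simplification of the paper, which works to establish the asymptotic equality $F_2(u_{2,\delta})=E_p(\phi_{2,\delta})+o(1)$. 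Second, a small but real imprecision: in the final contradiction the non-strict comparison $\Eps_p(\nu)\geq\F_1(\nu)$ is insufficient in the corner case $\bar\mu_1=0$, $\bar\mu_2=\mu$, where strict subadditivity of $\F_1$ gives nothing because $\F_1(0)=0$; there you need the strict inequality $\Eps_p(\mu)>\F_1(\mu)$, which is available (it is \eqref{eq:Finf>F1F2} with $p_1=p_2$, and is used explicitly in the paper's proof), so you should state and use it in that case.
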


In the last theorem, we investigate the distribution of mass on the hybrid when the parameters of the point interactions on the two planes are equal and sufficiently large and the power nonlinearity on the first plane is smaller than the one on the second. Before stating the result, let us introduce for every $p_1\neq p_2$  the critical mass
 \begin{equation}
 \label{mu-star}
\mu^\star:=\left(\f{\Eps_{p_1}(1)}{\Eps_{p_2}(1)}\right)^{\f{(4-p_1)(4-p_2)}{2(p_2-p_1)}}.
		\end{equation}

	\begin{theorem}
	\label{thm:sigma1=sigma2}
	Let $\sigma_1=\sigma_2=\sigma$, $p_1<p_2$, $\mu>0$ and $U=(u_1,u_2)$ be a ground state of $F$ at mass $\mu>0$. Then, as $\sigma\to+\infty$, the following limits hold:
	\begin{enumerate}
		\item[i)] if $\mu< \mu^\star$,  then 
  \begin{equation}
      \|u_1\|_{L^2(\Rd)}^2\to \mu\quad \text{and}\quad F(U)\to\Eps_{p_1}(\mu),
  \end{equation} 
		\item[ii)]  if $\mu>\mu^\star$, then 
  \begin{equation}
     \|u_2\|_{L^2(\Rd)}^2\to \mu\quad \text{and}\quad F(U)\to\Eps_{p_2}(\mu).
  \end{equation}
	\end{enumerate}
\end{theorem}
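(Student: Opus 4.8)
The plan is to show that, as $\sigma\to+\infty$, the ground state level $\F(\mu)$ converges to $\min\{\Eps_{p_1}(\mu),\Eps_{p_2}(\mu)\}$ and to read off from the structure of this minimum on which plane the mass concentrates. The starting point is the exact scaling law for the free NLS level on the plane, $\Eps_p(\mu)=\mu^{\frac{2}{4-p}}\Eps_p(1)$ with $\Eps_p(1)<0$, which follows from the mass preserving rescaling $v\mapsto \mu^{a}v(\mu^{b}\,\cdot\,)$ with $a=\frac{1}{4-p}$, $b=\frac{p-2}{2(4-p)}$. Since $p_i\in(2,4)$, the exponents $\alpha_i:=\frac{2}{4-p_i}$ satisfy $1<\alpha_1<\alpha_2$; writing $\Eps_{p_1}(\mu)-\Eps_{p_2}(\mu)=\mu^{\alpha_1}\bigl[\Eps_{p_1}(1)-\Eps_{p_2}(1)\mu^{\alpha_2-\alpha_1}\bigr]$, the bracket is strictly increasing from $\Eps_{p_1}(1)<0$ to $+\infty$, so the two levels cross exactly once, precisely at $\mu=\mu^\star$ of \eqref{mu-star}, with $\Eps_{p_1}(\mu)<\Eps_{p_2}(\mu)$ for $\mu<\mu^\star$ and the reverse for $\mu>\mu^\star$.

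First I would establish the upper bound $\F(\mu)\le\min\{\Eps_{p_1}(\mu),\Eps_{p_2}(\mu)\}$, uniformly in $\sigma$. For any $v\in H^1_\mu(\Rd)$ the pair $(v,0)$ lies in $\La_\mu$ and has vanishing charges, so the whole point-interaction and coupling part of the energy drops out and $F((v,0))=E_{p_1}(v)$, independently of $\sigma$; taking the infimum over $v$ yields $\F(\mu)\le\Eps_{p_1}(\mu)$, and symmetrically $\F(\mu)\le\Eps_{p_2}(\mu)$.

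The core is the matching lower bound. Fix a sequence $\sigma_n\to+\infty$ with ground states $U_n=(u_1^n,u_2^n)$, charges $q_i^n$, regular parts $\phi_i^n=u_i^n-q_i^n\G_1$ and masses $\mu_i^n=\|u_i^n\|_{L^2(\Rd)}^2$, $\mu_1^n+\mu_2^n=\mu$. Inserting the decomposition into $F$ and bounding the coupling by $-\beta\Re\{q_1^n\overline{q_2^n}\}\ge-\tfrac\beta2(|q_1^n|^2+|q_2^n|^2)$, the charge part is controlled from below by $\tfrac{\sigma_n+c-\beta}{2}\sum_i|q_i^n|^2$ with $c=\frac{\gamma-\log 2}{2\pi}$. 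Combined with the Gagliardo–Nirenberg inequality on $\D$, which bounds $\|u\|_{L^{p}(\Rd)}^{p}$ by a sub-quadratic power of $\|\phi\|_{H^1(\Rd)}^2+|q|^2$ times a power of the mass, the $\sigma$-independent a priori bound $F(U_n)\le\min\{\Eps_{p_1}(\mu),\Eps_{p_2}(\mu)\}<0$ lets me absorb the $L^{p_i}$ terms and conclude that $\|\phi_i^n\|_{H^1(\Rd)}$ stays bounded while $\tfrac{\sigma_n}{4}\sum_i|q_i^n|^2$ stays bounded, whence $q_i^n\to0$. This charge–vanishing step, resting on the sharp Gagliardo–Nirenberg estimate in the singular space $\D$, is the main obstacle. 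Once it is in place, $\|\phi_i^n\|_{L^2(\Rd)}^2=\mu_i^n+o(1)$ and $\|u_i^n\|_{L^{p_i}(\Rd)}^{p_i}=\|\phi_i^n\|_{L^{p_i}(\Rd)}^{p_i}+o(1)$, so dropping the nonnegative charge penalty and letting the coupling vanish gives
\[
F(U_n)\ge E_{p_1}(\phi_1^n)+E_{p_2}(\phi_2^n)+o(1)\ge\Eps_{p_1}(\mu_1^n)+\Eps_{p_2}(\mu_2^n)+o(1).
\]

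Finally I would analyse $h(t):=\Eps_{p_1}(t)+\Eps_{p_2}(\mu-t)$ for $t\in[0,\mu]$. Each summand is a negative multiple of a strictly convex power (since $\alpha_i>1$), so $h$ is strictly concave and attains its minimum only at an endpoint, with $\min_{[0,\mu]}h=\min\{\Eps_{p_1}(\mu),\Eps_{p_2}(\mu)\}$. Hence $F(U_n)\ge\min\{\Eps_{p_1}(\mu),\Eps_{p_2}(\mu)\}+o(1)$, which together with the upper bound forces $F(U_n)\to\min\{\Eps_{p_1}(\mu),\Eps_{p_2}(\mu)\}$ and, in particular, $h(\mu_1^n)\to\min_{[0,\mu]}h$. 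When $\mu\neq\mu^\star$ the two endpoint values are distinct, so by strict concavity the minimiser is the unique endpoint: for $\mu<\mu^\star$ it is $t=\mu$, and continuity of $h$ on the compact interval forces $\mu_1^n\to\mu$ and $F(U_n)\to\Eps_{p_1}(\mu)$, giving case (i); for $\mu>\mu^\star$ it is $t=0$, forcing $\mu_2^n\to\mu$ and $F(U_n)\to\Eps_{p_2}(\mu)$, giving case (ii). Since the sequence $\sigma_n$ was arbitrary, the stated limits follow.
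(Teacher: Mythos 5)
Your proposal is correct and follows essentially the same route as the paper: a $\sigma$-uniform upper bound $\F(\mu)\le\min\{\Eps_{p_1}(\mu),\Eps_{p_2}(\mu)\}$, charge vanishing as $\sigma\to+\infty$ obtained by combining coercivity of the quadratic part with the Gagliardo--Nirenberg inequality on $\D$ and the negativity of the ground state energy, the resulting lower bound $F(U_n)\ge\Eps_{p_1}(\mu_1^n)+\Eps_{p_2}(\mu_2^n)+o(1)$, and the strict-concavity/endpoint analysis (the paper's Lemma \ref{lem:F-inf} and Proposition \ref{livellidienergia}) to identify the limit of $F(U_n)$ and force the mass onto the correct plane. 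The only, harmless, imprecision is in your statement of the Gagliardo--Nirenberg bound: the inequality actually available, \eqref{eq:GN-noja}, controls $\|u\|_{L^p(\Rd)}^p$ through the $\sigma$-weighted norm $\NN_{\sigma,\la}(u)^{p-2}$ times the mass (a bound in terms of $\|\phi\|_{H^1(\Rd)}^2+|q|^2$ alone would require transferring between $u$-dependent decompositions as in \eqref{eq:GN-abct}), and it is the $\sigma$-weighted form that makes both the absorption step and the bound on $\sigma_n\sum_i|q_i^n|^2$ immediate, exactly as in the paper.
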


Theorem \ref{thm:sigma1=sigma2} and Theorem \ref{thm:p1=p2} are the first results on the distribution of the mass and of the charges on a quantum hybrid. The results are obtained in some specific regimes of the parameters only, and this is due to technical reasons. One of the main obstacle is due to the presence of the point interaction, that breaks the invariance by dilations of the equation and does not allow to perform the standard scalings associated with the Nonlinear Schr\"odinger Equation. Therefore, it becomes difficult to obtain precise informations on the infimum of the NLS energy with point interaction as function of the mass and the other parameters, and this seems a crucial ingredient to understand the problem in more generality.  \\

The paper is organized as follows:
\begin{itemize}[label=$-$]
 \item in Section \ref{sec:prelim} we recall some preliminary results, in particular concerning the domain of the energy with point interaction in Subsection \ref{subsec:domain}, the problem at infinity on the hybrid in Subsection \ref{subsec:probinf} and Gagliardo-Nirenberg inequalities in Subsection \ref{subsec:GN};
 \item in Section \ref{sec:prfthm12} we provide the proof of Theorem \ref{thm:existence} about the existence of ground states.
 \item Section \ref{sec:prfthm13} contains the proof of Theorem \ref{thm:charact} abot the properties of ground states; 
 \item in Section \ref{sec:prfthm1516} we present the proofs of Theorem \ref{thm:p1=p2} and Theorem \ref{thm:sigma1=sigma2}, concerning the distribution of mass and the comparison between the strength of the logarithmic singularities on the two planes. 
\end{itemize}
The paper ends with Appendix \ref{app:operator} about the rigorous definition of the class of self-adjoint operators on the hybrid. 

\section{Preliminaries}
 \label{sec:prelim}
  
In this section, we recall some well-known facts and results that will be useful along the paper.\\

\subsection{About the representation of the domain $\D$}

\label{subsec:domain}

First of all, let us recall that every function $u\in \D$ can be written as the sum of a \emph{regular part} belonging to $H^1(\Rd)$ and a \emph{singular part} given by a complex multiple of $\G_1$, which is the Green's function of $-\lap+1$. This representation is only one possible way to represent functions in $\D$. Indeed, if one denotes by $\G_\la$ the Green's function of $-\lap+\la$ for any $\la>0$, then one can write $u$ as
\begin{equation}
\label{eq:repr-lambda}
  u=\phi_\la+q\G_\la,\quad \text{with}\quad \phi_\la:=\phi+q\left(\G_1-\G_\la\right)\in H^1(\Rd).  
\end{equation}
Moreover, it is not hard to see that the quadratic form $Q_\sigma$ can be written as function of $\phi_\la$ and $q$ as 
\begin{equation*}
    Q_\sigma(u)=\|\na \phi_\la\|_{L^2(\Rd)}^2+\la\|\phi_\la\|_{L^2(\Rd)}^2-\la\|u\|_{L^2(\Rd)}^2+|q|^2\left(\sigma+\theta_\la\right),
\end{equation*}
with 
\begin{equation*}
 \theta_\la:=\frac{\log({\sqrt{\la}/2})+\gamma}{2\pi}.
 \end{equation*}
Therefore, the domain and the action of the energy $F_{p,\sigma}$ can be written respectively as
\begin{equation*}
\D= \{u\in L^2(\R^2)\,:\,\exists\, q\in\C, \lambda>0 : u-q\mathcal{G}_{\lambda}:=\phi_{\lambda}\in H^1(\R^2)\}
\end{equation*}
and
\begin{equation*}
  F_{p,\sigma}(u)=\frac{1}{2}\left(\|\na \phi_\la\|_{L^2(\Rd)}^2+\la\|\phi_\la\|_{L^2(\Rd)}^2-\la\|u\|_{L^2(\Rd)}^2\right)+\frac{|q|^2}{2}\left(\sigma+\theta_\la\right)-\frac{1}{p}\|u\|_{L^p(\Rd)}^p  .
\end{equation*}

This alternative way of representing functions belonging to $\D$ is very important in some of the proofs of the paper (see for example Theorem \ref{thm:existence}).

	\subsection{The problem at infinity on the hybrid.}

 \label{subsec:probinf}
	Particularly relevant in such minimization problems is the so-called problem at infinity.
 
	Let us consider the energy $F^\infty:H^1(\Rd)\times H^1(\Rd)\to \R$, defined as
    \begin{equation}
		\label{eq:Finf}
		F^{\infty}(U):= E_{p_1}(u_1) \,+\, E_{p_2}(u_2),
	\end{equation}
 and the associated ground states energy level
	\begin{equation}
		\label{eq:inf-Finf}
		\mathcal{F}^{\infty} (\mu):= \inf_{\substack{U\in H^1(\Rd)\times H^1(\Rd) \\ \norma{U}^2_{2}=\mu}} F^{\infty}(U).
	\end{equation}

	In the next lemma, we provide a first characterization of \eqref{eq:inf-Finf} when $p_i\in(2,4)$.
	\begin{lemma}
		\label{lem:F-inf}
		Let $p_i\in (2,4)$ for $i=1,2$. Then for every $\mu>0$
		\begin{equation}
			\label{eq:Finf-Ep}
			\F^{\infty}(\mu)=\min\{\mathcal{E}_{p_1}(\mu),\mathcal{E}_{p_2}(\mu)\}.
		\end{equation}
	\end{lemma}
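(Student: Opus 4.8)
The plan is to reduce the two-component minimization to a one-parameter problem over the splitting of the mass between the two planes, and then to exploit the scaling and concavity of the single-plane ground state level $\Eps_p$.

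First I would note that $F^\infty$ is a sum of two functionals subject to decoupled constraints, so that for a fixed splitting $\mu_1+\mu_2=\mu$, with $\mu_i\geq 0$, the infimum factorizes. Hence
\[
\F^\infty(\mu)=\inf_{\mu_1\in[0,\mu]}\bigl[\Eps_{p_1}(\mu_1)+\Eps_{p_2}(\mu-\mu_1)\bigr],
\]
where the endpoints use $\Eps_{p_i}(0)=0$ (empty component). Choosing $\mu_1\in\{0,\mu\}$ already gives the upper bound $\F^\infty(\mu)\leq\min\{\Eps_{p_1}(\mu),\Eps_{p_2}(\mu)\}$, so the content of the lemma is the reverse inequality.

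Next I would record the scaling law of the subcritical problem on the plane. Exploiting the two-dimensional dilation $u_{\la,\sigma}(x)=\sigma\,u(\la x)$, under which the Dirichlet energy is scale invariant in $\Rd$, together with the Gagliardo--Nirenberg inequalities of Subsection \ref{subsec:GN}, one obtains for every $p\in(2,4)$
\[
\Eps_p(\mu)=\mu^{\frac{2}{4-p}}\,\Eps_p(1),\qquad \Eps_p(1)<0;
\]
the negativity follows by testing the mass-preserving rescaling $u\mapsto\la\,u(\la\,\cdot)$ and letting $\la\to0^+$, since $p-2<2$. This identity is also the one underlying the definition of $\mu^\star$ in \eqref{mu-star}.

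Finally, since $\gamma_p:=\tfrac{2}{4-p}>1$ and $\Eps_p(1)<0$, the map $\mu\mapsto\Eps_p(\mu)=\Eps_p(1)\,\mu^{\gamma_p}$ is strictly concave on $[0,\infty)$ and vanishes at the origin. Therefore $g(\mu_1):=\Eps_{p_1}(\mu_1)+\Eps_{p_2}(\mu-\mu_1)$ is a sum of strictly concave functions, hence strictly concave on $[0,\mu]$, and a concave function on an interval lies above the minimum of its endpoint values: $g(\mu_1)\geq\min\{g(0),g(\mu)\}=\min\{\Eps_{p_2}(\mu),\Eps_{p_1}(\mu)\}$ for all $\mu_1\in[0,\mu]$. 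This yields the reverse inequality and proves the claim. The only genuinely nontrivial ingredient is the scaling identity together with the strict sign of $\Eps_p(1)$, which is exactly where $L^2$-subcriticality ($p<4$ in dimension two) enters; everything else is the elementary fact that a concave function on an interval is minimized at an endpoint, with the boundary cases $\mu_i=0$ to be treated with a little care.
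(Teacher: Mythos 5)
Your proposal is correct and follows essentially the same route as the paper: reduce to the one-parameter mass-splitting problem $\inf_{m\in[0,\mu]}\{\Eps_{p_1}(m)+\Eps_{p_2}(\mu-m)\}$, invoke the scaling law $\Eps_p(\mu)=\mu^{2/(4-p)}\Eps_p(1)$, and conclude that a strictly concave function on $[0,\mu]$ attains its minimum at an endpoint (the paper verifies concavity by computing $g''<0$ explicitly, while you argue it abstractly from $\Eps_p(1)<0$ and the strict convexity of $\mu^{2/(4-p)}$). Your additional justification of $\Eps_p(1)<0$ via the mass-preserving rescaling is a welcome detail that the paper leaves implicit.
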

	
	\begin{proof}
		Let us first observe that the minimization problem \eqref{eq:inf-Finf} is decoupled and can be reduced to an algebraic problem, in particular
		\[
		\F^{\infty}(\mu)=\inf\{\mathcal{E}_{p_1}(\mu_1) + \mathcal{E}_{p_2}(\mu_2):\mu_1 + \mu_2 =\mu\}.
		\]
		It is well known that $\Eps_p(\mu)<0$ for every $\mu>0$ and $p>2$ and $\Eps_p(\mu)>-\infty$ for every $\mu>0$ and $p$ belonging to the $L^2$-subcritical regime, i.e. $2<p<4$. Moreover, by scaling properties of the energy, it follows that $\Eps_p(\mu)=-\rho_p \mu^{\frac{2}{4-p}}$, with $\rho_p=-\Eps_p(1)$, thus
		\[
		\F^{\infty} (\mu) = \inf_{m\in[0,\mu]} g(m),
		\]
		where 
		\[
		g(m)= -\rho_{p_1}\,m^{\frac{2}{4-p_1}} - \rho_{p_2}\,(\mu-m)^{\frac{2}{4-p_2}}.
		\]
		The function $g$ is continuous in $[0,\mu]$, of class $C^2$ in $(0,\mu)$ and 
		\begin{align}
			g''(m)&= -\frac{2(p_1-2)}{(4-p_1)^2}\,\rho_{p_1} m^{\frac{2(p_1-3)}{4-p_1}} - \frac{2(p_2-2)}{(4-p_2)^2} \,\rho_{p_2}(\mu-m)^{\frac{2(p_2-3)}{4-p_2}} <0 \quad \forall\, m\in (0,\mu), 
		\end{align}
		hence $g$ attains the minimum either at $0$ or at $\mu$, entailing the thesis.
		\end{proof}
 
  	In the next proposition, we give a more precise characterization of \eqref{eq:inf-Finf} when $p_1\neq p_2$.
   
		\begin{proposition}
			\label{livellidienergia}
			Let $p_i\in (2,4)$ for $i=1,2$, with $p_1\neq p_2$.  Then, given $\mu^\star$ as in \eqref{mu-star},
	there results that 
  \begin{equation}
  \F^\infty(\mu)=
      \begin{cases}
          \Eps_{p_1}(\mu)\quad \text{if}\quad (p_1-p_2)(\mu-\mu^\star)>0,\\ 
          \Eps_{p_2}(\mu)\quad \text{if}\quad (p_1-p_2)(\mu-\mu^\star)<0.
      \end{cases}
  \end{equation}
		\end{proposition}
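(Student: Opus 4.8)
The plan is to reduce everything to a comparison of the two explicit one-plane levels, since Lemma \ref{lem:F-inf} already gives $\F^\infty(\mu)=\min\{\Eps_{p_1}(\mu),\Eps_{p_2}(\mu)\}$. Thus the entire content of the proposition is to decide, for each $\mu$, which of the two levels is the smaller one, and the definition \eqref{mu-star} of $\mu^\star$ is precisely tuned to be the crossover point where they coincide.

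First I would recall the scaling identity already used in the proof of Lemma \ref{lem:F-inf}, namely $\Eps_p(\mu)=-\rho_p\,\mu^{2/(4-p)}$ with $\rho_p=-\Eps_p(1)>0$. With it, the inequality $\Eps_{p_1}(\mu)\leq\Eps_{p_2}(\mu)$ is equivalent to $\rho_{p_1}\mu^{2/(4-p_1)}\geq\rho_{p_2}\mu^{2/(4-p_2)}$, that is
\[
\frac{\rho_{p_1}}{\rho_{p_2}}\geq \mu^{\frac{2(p_2-p_1)}{(4-p_1)(4-p_2)}}.
\]
Since $\rho_{p_i}=-\Eps_{p_i}(1)$, the ratio $\rho_{p_1}/\rho_{p_2}$ coincides with $\Eps_{p_1}(1)/\Eps_{p_2}(1)$, and raising \eqref{mu-star} to the power $e:=\tfrac{2(p_2-p_1)}{(4-p_1)(4-p_2)}$ gives exactly $(\mu^\star)^{e}=\Eps_{p_1}(1)/\Eps_{p_2}(1)$. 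Hence the condition $\Eps_{p_1}(\mu)\leq\Eps_{p_2}(\mu)$ reduces to the clean power inequality $(\mu^\star)^{e}\geq \mu^{e}$.

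Next I would track the sign of the exponent $e$, whose only nonconstant factor is $p_2-p_1$, because $(4-p_1)(4-p_2)>0$ for $p_i\in(2,4)$. If $p_1<p_2$ then $e>0$ and monotonicity of $t\mapsto t^{e}$ turns $(\mu^\star)^e\geq \mu^e$ into $\mu\leq\mu^\star$; since $p_1-p_2<0$, this is the same as $(p_1-p_2)(\mu-\mu^\star)\geq 0$. If instead $p_1>p_2$ then $e<0$, the power map reverses and the inequality becomes $\mu\geq\mu^\star$, which with $p_1-p_2>0$ is again $(p_1-p_2)(\mu-\mu^\star)\geq 0$. In both cases one thus obtains the equivalence
\[
\Eps_{p_1}(\mu)\leq\Eps_{p_2}(\mu)\quad\Longleftrightarrow\quad (p_1-p_2)(\mu-\mu^\star)\geq 0,
\]
and likewise with both inequalities strict. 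Combining this with Lemma \ref{lem:F-inf} gives $\F^\infty(\mu)=\Eps_{p_1}(\mu)$ when $(p_1-p_2)(\mu-\mu^\star)>0$ and $\F^\infty(\mu)=\Eps_{p_2}(\mu)$ when the reverse strict inequality holds, which is the claim.

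I do not expect any genuine obstacle, as the argument is entirely algebraic once Lemma \ref{lem:F-inf} is in hand. The only delicate point—and the single place where a slip would flip the whole statement—is the bookkeeping of the exponent $e$: its sign depends on $p_2-p_1$ and thereby reverses the monotonicity of the power map, so one must verify in each case that the inequality between $\mu$ and $\mu^\star$ is correctly matched to the correct branch of $(p_1-p_2)(\mu-\mu^\star)$.
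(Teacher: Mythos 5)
Your proof is correct. It differs from the paper's in how the comparison between $\Eps_{p_1}(\mu)$ and $\Eps_{p_2}(\mu)$ is carried out. Both arguments start from the same two ingredients, Lemma \ref{lem:F-inf} and the scaling identity $\Eps_p(\mu)=-\rho_p\,\mu^{2/(4-p)}$ with $\rho_p=-\Eps_p(1)>0$; but the paper then runs a calculus argument on the difference $h(\mu):=\Eps_{p_1}(\mu)-\Eps_{p_2}(\mu)$, computing its limits as $\mu\to 0$ and $\mu\to+\infty$ and the sign of $h'$, to conclude that $h$ vanishes at exactly one point and to read off the sign of $h$ on either side; the identification of that unique zero with the explicit value \eqref{mu-star} is left implicit. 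You instead solve the inequality $\Eps_{p_1}(\mu)\leq\Eps_{p_2}(\mu)$ directly: rewriting it as $(\mu^\star)^e\geq\mu^e$ with $e=\tfrac{2(p_2-p_1)}{(4-p_1)(4-p_2)}$ and tracking the sign of $e$, which flips the monotonicity of $t\mapsto t^e$ in exactly the way that produces the symmetric condition $(p_1-p_2)(\mu-\mu^\star)\geq 0$. Your route is more elementary (no derivatives, no limiting behavior) and has the merit of verifying explicitly that the crossover point is the one given by formula \eqref{mu-star}, a detail the paper glosses over; the paper's derivative argument gives slightly more structural information (e.g.\ that $h$ is first monotone in one direction and then in the other, so the two energy levels cross transversally at $\mu^\star$), which is not needed for the statement. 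Both proofs rely, implicitly, on the standard fact that $\Eps_p(1)<0$ for $p\in(2,4)$, which is what makes $\rho_p>0$ and $\mu^\star$ well defined.
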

	\begin{proof}
Let us introduce for every $\mu>0$ the function \[h(\mu):=\mathcal{E}_{p_1}(\mu,\Rd) -\mathcal{E}_{p_2}(\mu,\Rd)=-\rho_{p_1}\mu^{\f{2}{4-{p_1}}} +\rho_{p_2}\mu^{\f{2}{4-p_2}},\]
and observe that 
\begin{equation}
\lim_{\mu\to 0}h(\mu)=0,\quad \lim_{\mu\to+\infty}h(\mu)=\begin{cases}
-\infty, \quad \text{if}\quad p_1>p_2,\\
+\infty,\quad \text{if}\quad p_1<p_2,
\end{cases}
\end{equation}
By direct computation, 
\[
h'(\mu)=-\f{2\rho_{p_1}}{4-p_1}\mu^{\f{p_1-2}{4-p_1}} + \f{2\rho_{p_2}}{4-p_2}\mu^{\f{p_2-2}{4-{p_2}}},
\]
hence there exists $\widetilde{\mu}>0$ such that $h'(\mu)<0$ if and only if $(p_1-p_2)(\mu-\widetilde{\mu})<0$. This entails that $h(\mu)<0$ if and only if $(p_1-p_2)(\mu-\mu^\star)>0$, with $\mu^\star$ being the only value for which $h(\mu)=0$: the thesis follows from the definition of $h$.
	\end{proof}
	
	\subsection{Modified Gagliardo-Nirenberg inequalities}

 \label{subsec:GN}
 
	We recall now some Gagliardo-Nirenberg inequalities that will be useful in the following. Let us start from the classical Gagliardo-Nirenberg inequality on $\R^2$: for every $p>2$ \cite[Theorem 1.3.7]{cazenave} there exists $K_p$ such that
	\begin{equation}
		\label{eq:GNstandard}
		\norma{u}^{p}_{L^{p}(\R^2)} \leq K_p\,\norma{\nabla u}^{p-2}_{L^2(\R^2)}\norma{u}^2_{L^2(\R^2)}\qquad \forall u \in H^1(\R^2).
	\end{equation}
Let us now present some modified Gagliardo-Nirenberg inequalities proved in \cite{ABCT,CFN} valid for functions in 
\[
\D= \{u\in L^2(\R^2):\exists\, q\in\C, \lambda>0 : u-q\mathcal{G}_{\lambda}:=\phi_{\lambda}\in H^1(\R^2)\}.
\]
First of all, let us observe that for $\sigma \in \R$ and $\lambda>4e^{-4\pi\sigma-2\gamma}$ the space $\D$ can be equipped with the norm 
	\begin{equation}
		\label{norma}
	\NN_{\sigma,\lambda}(u) :=\bigl(Q_{\sigma}(u) + \lambda\norma{u}^2_2\bigl)^{\frac{1}{2}}. 
	\end{equation}
   The presence of such a limitation on the values of $\sigma$ for the definition \eqref{norma} is due to the fact that the quadratic form $Q_\sigma$ is not positive and 
   \begin{equation*}
   \inf_{v\in \D\setminus\{0\}}\frac{Q_\sigma(v)}{\|v\|_{L^2(\Rd)}^2}=-4e^{-4\pi\sigma-2\gamma}.
   \end{equation*}
   On the one hand,
	the authors in \cite{CFN} proved that there exists $C_p>0$ such that 
	\begin{equation}
		\label{eq:GN-noja}
		\norma{u}^{p}_{L^{p}(\R^2)}\leq C_p \NN_{\sigma,\lambda}(u)^{p-2} \norma{u}^2_{L^2(\R^2)},\qquad \forall \,u\in\D.
	\end{equation} 
 
 On the other hand, in \cite{ABCT} the authors proved the existence of a constant $M_p>0$ such that every function $u=\phi_{\la(u)}+q\G_{\la(u)}\in \D\setminus H^1(\Rd)$, with $\la(u)=|q|^2/{\|u\|_2^2}$, satisfies
		\begin{equation}
			\label{eq:GN-abct}
			\norma{u}^{p}_{L^{p}(\R^2)} \leq M_p (\norma{\nabla \phi_{\lambda(u)}}^{p-2}_{L^2(\R^2)}+ \abs{q}^{p-2})\norma{u}^2_{L^2(\R^2)}.
		\end{equation}
Let us observe that, differently from \eqref{eq:GN-noja}, in \eqref{eq:GN-abct} the authors take advantage of a specific representation of the functions $u\in \D\setminus H^1(\Rd)$, in which the parameter $\la$ depends on the function $u$ itself.


		\section{Proof of Theorem \ref{thm:existence}}

  \label{sec:prfthm12}
	This section is entirely devoted to the proof of Theorem \ref{thm:existence}.
 
 \begin{proof}[Proof of Theorem \ref{thm:existence}]
  Let $U_{n}=(u_{1,n},u_{2,n})$  be a minimizing sequence for \eqref{eq:F} at mass $\mu$, i.e.
  \begin{equation*}
  \|U_n\|_{2}^2=\mu\quad \forall\,n\quad\text{and}\quad F(U_n)\to \F(\mu)\quad \text{as}\quad n\to+\infty,
  \end{equation*}
  and $q_{i,n}$ be the charge associated with $u_{i,n}$ for $i=1,2$. Let $\sigma=\min\{\sigma_1,\sigma_2\}$, fix $\la=4e^{4\pi(2\beta-\sigma)-2\gamma}$ and consider the decomposition $u_{i,n}=\phi_{i,n}+q_{i,n}\G_\la$ for $i=1,2$: when not necessary, we omit the parameter $\la$ to lighten the notation.

We divide the proof in three steps.

  \emph{Step 1. There exist $\phi_i\in H^1(\Rd)$ and $q_i\in \C$ such that, up to subsequences, \begin{equation*}
    \phi_{i,n}\deb \phi_i \quad \text{in}\quad H^1(\Rd),\quad q_{i,n}\to q_i \quad \text{in}\quad \C. 
\end{equation*}}
  By using that $\|U_n\|_{2}^2=\mu$, \eqref{eq:GN-noja} and \eqref{norma} and observing that 
 \[
 (\sigma+\theta_\la)|q_{i,n}|^2\leq (\sigma_i+\theta_\la)|q_{i,n}|^2\leq \NN_{\sigma_i}(u_{i,n})^2,\quad \forall\,n\in\N,\quad i=1,2,
 \] 
 one obtains that
 \begin{equation}
 \label{bound-Fun-partial}
 \begin{split}
  F(U_n)\geq & \frac{1}{2}\NN_{\sigma_1}(u_{1,n})^2+ \frac{1}{2}\NN_{\sigma_2}(u_{2,n})^2-\frac{\beta}{\sigma+\theta_\la} \NN_{\sigma_1}(u_{1,n})\NN_{\sigma_2}(u_{2,n})\\
  &- \frac{\lambda}{2}\mu - \frac{C_{p_1}\,\mu}{p_1}\NN_{\sigma_1}(u_{1,n})^{p_1-2} - \frac{C_{p_2}\,\mu}{p_2}\NN_{\sigma_2}(u_{2,n})^{p_2-2} 
  \end{split}
 \end{equation}
 Since $\sigma+\theta_\la=2\beta$ according to our choice of $\la$, \eqref{bound-Fun-partial} entails  
 \begin{equation}
 \label{bound-Fun}
 F(U_n)\geq \frac{1}{4}\NN_{\sigma_1}(u_{1,n})^2+ \frac{1}{4}\NN_{\sigma_2}(u_{2,n})^2-\frac{C_{p_1}\,\mu}{p_1}\NN_{\sigma_1}(u_{1,n})^{p_1-2} - \frac{C_{p_2}\,\mu}{p_2}\NN_{\sigma_2}(u_{2,n})^{p_2-2} -\frac{\la}{2}\mu.    
 \end{equation}
Since $F(U_n)<0$ and $p_i<4$, by \eqref{bound-Fun} it follows that  $\NN_{\sigma_i}(u_{i,n})$ are equibounded for $i=1,2$, in particular $\phi_{i,n}$ are equibounded in $H^1(\Rd)$ and $q_{i,n}$ are equibounded in $\C$, hence by Banach-Alaoglu-Bourbaki theorem the thesis of Step 1 follows.\\

\emph{Step 2. It holds that $\F(\mu)<\F^\infty(\mu)$.}
By lemma \ref{lem:F-inf} and \cite[Proposition 3.2]{ABCT}, we have that
		\begin{equation}
  \label{eq:Finf>F1F2}
			\F^{\infty}(\mu)=\min\{\mathcal{E}_{p_1}(\mu),\mathcal{E}_{p_2}(\mu)\}> \min\{\F_{1}(\mu), \F_{2}(\mu)\}.
		\end{equation}
		
  Moreover, let us notice that 
  \begin{equation}
  \label{eq:inf-Fmu-moduli}
  \inf_{U\in\La_{\mu}}F(U)= \inf_{U\in\La_{\mu}}\{F_{1}(u) + F_{2}(v) - \beta \abs{q_{1}}\,\abs{q_{2}}\}\quad\forall\,\mu>0.
  \end{equation}
  Indeed, on the one hand it holds for every $U=(u,v)\in \La_\mu$ that
  $F(U)\geq F_1(u)+F_2(v)-\beta|q_1||q_2|$. On the other hand, if $u=\phi_1+q_1\G_\la$ and $v=\phi_2+q_2\G_\la$, then it is possible to find a proper phase factor $e^{i\theta}$ such that the function $\widetilde{U}=(u,e^{i\theta} v)$ satisfies
  \begin{equation*}
     \Re\left\{q_1\overline{e^{i\theta}q_2}\right\}=|q_1||q_2|,
  \end{equation*}
  hence passing to the infimum the equality is achieved. 

By \eqref{eq:inf-Fmu-moduli}, it follows that
		\begin{equation}
  \label{eq:Fmu-beta0}
  \begin{split}
	\F(\mu)=\inf_{U\in\La_{\mu}}F(U)&= \inf_{U\in\La_{\mu}}\{F_{1}(u) + F_{2}(v) - \beta \abs{q_{1}}\,\abs{q_{2}}\}\\
    &\leq \inf_{U\in\La_{\mu}}\{F_{1}(u) + F_{2}(v)\}\\
    &=\inf_{t\in[0,\mu]}\{\F_1(t)+\F_2(\mu-t)\}.
     \end{split}
		\end{equation}
	Since $\F_{1}$ and $\F_{2}$ are strictly concave functions of the mass by \cite[Lemma 4.2]{ABCT-24}, then also the function $\F_1(t)+\F_2(\mu-t)$ turns out to be a strictly concave function of $t$ and attains its minimum at $t=0$ or $t=\mu$, so that
		\begin{equation}
			\label{eq:inf-t-F1F2}
			\inf_{t\in[0,\mu]}\{\F_1(t)+\F_2(\mu-t)\}=\min\{\F_{1}(\mu), \F_{2}(\mu)\}.
		\end{equation}
  By combining \eqref{eq:Finf>F1F2}, \eqref{eq:Fmu-beta0} and \eqref{eq:inf-t-F1F2}, we conclude the proof of Step 2.\\
  
  \emph{Step 3. The function $U=(u_1,u_2)=(\phi_1+q_1\G_\la, \phi_2+q_2\G_\la)$ is a ground state of \eqref{eq:F} at mass $\mu$.} By Step 1, we deduce that $u_{i,n}\deb u_i$ weakly in $L^2(\Rd)$ since $\phi_{i,n}\deb \phi_i$ weakly in $L^2(\Rd)$ and $q_{i,n}\G_\la\to q_i\G_\la$ in $L^2(\Rd)$. 
  
  Set now  $m=\|U\|_{L^{2}(\I)}^{2}:=\|u_1\|_{L^{2}(\Rd)}^{2}+\|u_2\|_{L^{2}(\Rd)}^{2}$. First, we observe that by weak lower semicontinuity of the norms
		\begin{equation*}
			\|u_1\|_{L^{2}(\R^2)}^{2}+\|u_2\|_{L^{2}(\Rd)}^{2}\leq \liminf_{n\to+\infty}\|u_{1,n}\|_{L^{2}(\Rd)}^{2}+\liminf_{n\to+\infty}\|u_{2,n}\|_{L^{2}(\Rd)}^{2}\leq \liminf_{n\to+\infty}\|U_{n}\|_{L^{2}(\I)}^{2},
		\end{equation*}
  thus $m\leq \mu$. 
  
  Suppose first that $m=0$, i.e. $u\equiv 0$ and $v\equiv 0$. In particular, this entails that $q_{1,n}\to 0$ and $q_{2,n}\to 0$ as $n\to +\infty$. Therefore, relying on Step 2 we deduce that for $n$ sufficiently large 
  \begin{align}
			\mathcal{F}^{\infty}(\mu)>\F(\mu)=& \lim_{n\to +\infty}\left\{ F_{1}(u_{1,n}) + F_{2}(u_{2,n}) -\beta \,\abs{q_{1,n}}\,\abs{q_{2,n}}\right\}\\
			=& \lim_{n\to +\infty} \left\{E_{p_1}(\phi_{1,n}) + E_{p_2}(\phi_{2,n})\right\}\\
			\geq&
			\lim_{n\to +\infty}\left\{\mathcal{E}_{p_1}(\norma{\phi_{1,n}}^2_2) +  \mathcal{E}_{p_2}(\norma{\phi_{2,n}}_2^2)\right\}\\
   =& \lim_{n\to +\infty}\left\{\mathcal{E}_{p_1}(\norma{u_{1,n}}^2_2) +  \mathcal{E}_{p_2}(\norma{u_{2,n}}_2^2)\right\}\\
			\geq &\, \inf\{\mathcal{E}_{p_1}(\mu_1) \,+\, \mathcal{E}_{p_2}(\mu_2):\mu_1 +\mu_2=\mu\}=\mathcal{F}^{\infty}(\mu),
		\end{align}
thus $m\neq 0$. 

Suppose now that $0<m<\mu$. Note that, since $u_{i,n}\deb u_i$ in $L^{2}(\Rd)$ for $i=1,2$, then
		\begin{equation}
			\label{eq:mass-m-mu}
			\|U_{n}-U\|_{L^{2}(\I)}^{2}=\norma{U_n}_2^2 - \norma{U}_2^2 +  o(1) =\mu-m+o(1),\quad\text{as}\quad n\to+\infty.
		\end{equation}
	
	On the one hand, since $p_i>2$ for $i=1,2$, $\f{\mu}{\|U_{n}-U\|_{2}^{2}}=\frac{\mu}{\mu-m}+o(1)>1$ as $n\to+\infty$ by \eqref{eq:mass-m-mu} and 
		$\norma{\sqrt{\f{\mu}{\|U_{n}-U\|_{2}^{2}}}  (U_{n}-U)}_2^2 =\mu$, there results that
		\begin{align}
			\F(\mu)&\leq F\left(\sqrt{\f{\mu}{\|U_{n}-U\|_{2}^{2}}} (U_{n}-U)\right)\\
   &=\frac{1}{2}\f{\mu}{\|U_{n}-U\|_{2}^{2}}Q(U_n-U)-\sum_{i=1,2}\frac{1}{p_i} \left(\f{\mu}{\|U_{n}-U\|_{2}^{2}}\right)^{\f{p_i}{2}}\norma{u_{i,n} -u_i}_{p_i}^{p_i}\\
			&<\f{\mu}{\|U_{n}-U\|_{L^{2}(\I)}^{2}}F(U_{n}-U), 
		\end{align}
		hence
		\begin{equation}
			\label{eq:FUn-U}
			\liminf_{n\to+\infty} F(U_{n}-U)\geq \f{\mu-m}{\mu}\F(\mu). 
		\end{equation}
		On the other hand, by similar computations
		\begin{equation*}
			\F(\mu)\leq F\left(\sqrt{\f{\mu}{\|U\|_{2}^{2}}}U\right)<\f{\mu}{\|U\|_{2}^{2}}F(U)
		\end{equation*}
	 thus
		\begin{equation}
			\label{eq:FU>mmu}
			F(U)>\f{m}{\mu}\F(\mu).
		\end{equation}
		
		In addition, since 
 $u_{i,n}\deb u_i$ in $L^2(\Rd)$, $\phi_{i,n}\deb \phi_{i}$ in $H^{1}(\Rd)$ and $q_{i,n}\to q_i$ in $\C$, then 
		\begin{align}
			Q(U_{n}-U)&=Q(U_{n})-Q(U)+o(1),\quad\text{as}\quad n\to +\infty,
		\end{align}
		while by Brezis-Lieb lemma \cite{BL-83} applied to $u_{i,n}$ we get
		\begin{equation*}
			\|u_{i,n}\|_{L^{p_i}(\Rd)}^{p_i}=\|u_{i,n}-u\|_{L^{p_i}(\Rd)}^{p_i}+\|u\|_{L^{p_i}(\Rd)}^{p_i}+o(1),\quad\text{as}\quad n\to+\infty.
		\end{equation*}
        Thus
		\begin{equation}
			\label{eq:BrezisLieb}
			F(U_{n})= F(U_{n}-U)+F(U)+o(1),\quad\text{as}\quad n\to+\infty.
		\end{equation}
		Combining \eqref{eq:FUn-U}, \eqref{eq:FU>mmu} and \eqref{eq:BrezisLieb}, it follows that
		\begin{equation*}
			\F(\mu)=\liminf_{n\to+\infty} F(U_{n})=\liminf_{n\to+\infty} F(U_{n}-U)+F(U)>\f{\mu-m}{\mu}\F(\mu)\,+\,\f{m}{\mu}\F(\mu)=\F(\mu),
		\end{equation*}
		which, being a contradiction, forces $m=\mu$. In particular, $U\in \La_{\mu}$ and thus $u_{i,n}\to u_i$ in $L^{2}(\Rd)$ and $\phi_{i,n}\to \phi_{i}$ in $L^{2}(\Rd)$. 
        
        In order to conclude that $U$ is a ground state, we are left to prove that
		\begin{equation}
			\label{Fsigma-lsc}
			F(U)\leq \liminf_{n\to+\infty} F(U_{n})=\F(\mu),
		\end{equation}
  which reduces to prove that $u_{i,n}\to u_i$ in $L^{p_i}(\Rd)$ for $i=1,2$. By \eqref{eq:GN-noja}, we obtain that
		\begin{equation*}
			\|u_{i,n}-u_i\|_{L^{p_i}(\Rd)}^{p_i}\leq C_{p_i}\NN_{\sigma_i}(u_{i,n}-u_i)^{p_i-2}\|u_{i,n}-u_i\|_{L^{2}(\Rd)}^{2}\to 0,\quad \text{as}\quad n \to +\infty,
		\end{equation*}
		which concludes the proof since $\NN_{\sigma_i}(u_{i,n}-u_i)$ is equibounded and $u_{i,n}\to u_i$ in $L^{2}(\Rd)$.
 \end{proof}

\begin{remark}
 Let us stress that the choice of $\la$ in the proof of Theorem \ref{thm:existence} is not the only possible one: it is sufficient to choose $\la>0$ such that $\sigma+\theta_\la>2\beta$ and all the arguments in the proof can be repeated. 
\end{remark}

		\section{Properties of ground states: proof of Theorem \ref{thm:charact}}

  \label{sec:prfthm13}
		This section is dedicated to the proof of the Theorem \ref{thm:charact}. The first result deals with the case $\beta>0$ and establishes that $u_i\not \equiv 0$ and $q_i>0$ for $i=1,2$, up to the multiplication by a phase factor.
  
  \begin{proposition}
			\label{prop:qi>0}
			Let $p_i\in (2,4)$, $\sigma_i\in\R$ for $i=1,2$, $\beta>0$ and $\mu>0$. Let $U=(u_1,u_2)\in \Lambda_\mu$ be a ground state of $\F$ at mass $\mu$, then $u_i\not\equiv 0$ and $q_i>0$ for $i=1,2$, up to a multiplication by a phase factor. Moreover, for $i=1,2$ and $\la_i>0$ there results that $\phi_{i}:=u_i-q_i\G_{\la_i}\not\equiv 0$.
		\end{proposition}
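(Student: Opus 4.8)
The strategy is to argue by contradiction, using that for $\beta>0$ the coupling $-\beta\Re\{q_1\overline{q_2}\}$ strictly rewards a pair of nonzero, phase-aligned charges, and to produce from the given ground state $U=(u_1,u_2)$ a competitor in $\La_\mu$ of strictly lower energy. First I would fix the phases: if $q_1,q_2\neq 0$, replacing $u_1$ by $e^{i\th}u_1$ leaves $F_1(u_1)$, $\|u_1\|_2$ and $\|u_1\|_{L^{p_1}}$ untouched while rotating $q_1\mapsto e^{i\th}q_1$, so minimality forces $\Re\{q_1\overline{q_2}\}=|q_1||q_2|$, i.e.\ $q_1$ and $q_2$ share an argument; a global phase then gives $q_1,q_2>0$. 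Thus it suffices to prove $q_i\neq 0$ for $i=1,2$. Suppose instead $q_2=0$ (the case $q_1=0$ is symmetric). If also $q_1=0$, then $u_1,u_2\in H^1(\Rd)$, whence $F(U)=E_{p_1}(u_1)+E_{p_2}(u_2)=F^{\infty}(U)\geq\F^{\infty}(\mu)$, contradicting the inequality $\F(\mu)<\F^{\infty}(\mu)$ established in Step 2 of the proof of Theorem \ref{thm:existence}. Hence $q_1\neq 0$, and after a phase rotation $q_1>0$.

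The core is to exclude $q_2=0$ with $q_1>0$. Set $\mu_2:=\|u_2\|_2^2$ and assume first $\mu_2>0$. By the single-plane theory \cite{ABCT,FGI-22} there is a ground state $w_2$ of $F_2$ at mass $\mu_2$ whose charge $\hat q_2$ is strictly positive, and $\F_2(\mu_2)\leq\Eps_{p_2}(\mu_2)$ since $H^1(\Rd)\subset\D$. As $q_2=0$ kills the coupling, $F(U)=F_1(u_1)+E_{p_2}(u_2)\geq F_1(u_1)+\Eps_{p_2}(\mu_2)$, while $(u_1,w_2)\in\La_\mu$ and, using $q_1,\hat q_2>0$ and $\beta>0$,
\[
F(u_1,w_2)=F_1(u_1)+\F_2(\mu_2)-\beta\,q_1\hat q_2\leq F_1(u_1)+\Eps_{p_2}(\mu_2)-\beta\,q_1\hat q_2<F_1(u_1)+\Eps_{p_2}(\mu_2)\leq F(U)=\F(\mu),
\]
contradicting the definition of $\F(\mu)$.

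The degenerate subcase $\mu_2=0$ (that is $u_2\equiv 0$) must be treated apart, since there is no single-plane ground state of vanishing mass to compare with; here $u_1$ minimizes $F_1$ at mass $\mu$, so again $q_1>0$. Fixing $\la>0$ and taking, for small $\ep>0$, the competitor $(s_\ep u_1,\ep\G_\la)\in\La_\mu$ with $s_\ep=\sqrt{1-\ep^2\|\G_\la\|_2^2/\mu}$, a direct expansion gives $F_1(s_\ep u_1)=F_1(u_1)+O(\ep^2)$, $F_2(\ep\G_\la)=O(\ep^2)$ and coupling $-\beta(s_\ep q_1)\ep=-\beta q_1\ep+O(\ep^2)$, so that $F(s_\ep u_1,\ep\G_\la)-\F(\mu)=-\beta q_1\ep+o(\ep)<0$ for $\ep$ small, a contradiction. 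This proves $q_1,q_2\neq 0$, hence, by the phase reduction, $q_1,q_2>0$ and $u_i\not\equiv 0$. For the last claim, fix $i$ and $\la_i>0$ and suppose $\phi_i=u_i-q_i\G_{\la_i}\equiv 0$, i.e.\ $u_i=q_i\G_{\la_i}$. Inserting this into the stationary equation \eqref{eq:stat} restricted to the $i$-th plane off the junction, namely $(-\lap+\omega)u_i=|u_i|^{p_i-2}u_i$, and using $(-\lap+\la_i)\G_{\la_i}=0$ there, yields $q_i(\omega-\la_i)\G_{\la_i}=q_i^{p_i-1}\G_{\la_i}^{p_i-1}$, i.e.\ $\omega-\la_i=q_i^{p_i-2}\G_{\la_i}^{p_i-2}$; the left-hand side is constant while the right-hand side is not (as $p_i>2$ and $\G_{\la_i}$ is non-constant), a contradiction. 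Hence $\phi_i\not\equiv 0$ for every $\la_i>0$.

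The main obstacle is the single-vanishing-charge case, because the coupling gain $-\beta q_1\hat q_2$ is only linear in the second charge and could a priori be cancelled by the cost of switching that charge on. The device that resolves it is to compare not with a hand-made perturbation but with the genuine single-plane point-interaction ground state of the same mass $\mu_2$: its charge is automatically positive and its energy does not exceed the free level $\Eps_{p_2}(\mu_2)$, so the strict coupling gain survives. The only place needing extra care is then the massless subcase $\mu_2=0$, where such a comparison is unavailable and one falls back on the explicit $O(\ep)$ expansion above.
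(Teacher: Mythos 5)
Your proof is correct, and its central step takes a genuinely different route from the paper's. The paper splits the nondegeneracy into two steps: first it proves $u_i\not\equiv 0$ by using the matching conditions of the Euler--Lagrange system \eqref{eq:stat-detailed} (if $u_1\equiv 0$, then $\beta>0$ forces $q_2=0$, hence $u_2\in H^1(\Rd)$ and $\F(\mu)\geq \F^{\infty}(\mu)$, against Step 2 of Theorem \ref{thm:existence}); then it excludes $q_i=0$ by noting that a vanishing charge decouples the energy into $E_{p_1}(u_1)+F_2(u_2)$, so that the strict concavity of $m\mapsto\Eps_{p_1}(m)$ and $m\mapsto\F_2(m)$ (from \cite[Lemma 4.2]{ABCT-24}) forces all the mass onto a single plane, contradicting the first step. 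You instead exclude vanishing charges by exhibiting explicit competitors: when $q_2=0$ and $\mu_2=\|u_2\|_2^2>0$ you swap $u_2$ for the single-plane ground state $w_2$ of $F_2$ at mass $\mu_2$, whose charge $\hat q_2$ is nonzero by the planar theory of \cite{ABCT,FGI-22}, so the coupling term yields the strict gain $-\beta q_1\hat q_2<0$ on top of $\F_2(\mu_2)\leq\Eps_{p_2}(\mu_2)$; when $\mu_2=0$ you switch on an $\ep$-charge via $(s_\ep u_1,\ep\G_\la)$ and win at order $\ep$ against an $O(\ep^2)$ cost. The trade-off is clear: the paper's concavity argument needs neither single-plane existence theory nor asymptotic expansions and disposes of both subcases at once, while your argument avoids the concavity lemma and (for this step) the vertex conditions, at the price of importing existence and nonvanishing charge of planar ground states --- facts proved in \cite{ABCT} and invoked elsewhere in the paper, so the import is legitimate (alternatively, $\hat q_2\neq 0$ follows from the strict inequality $\F_2(\mu_2)<\Eps_{p_2}(\mu_2)$ that the paper already uses in its Step 2). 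Your phase-alignment argument and your proof that $\phi_i\not\equiv 0$ (constant versus non-constant sides of the stationary equation) coincide with the paper's Steps 4 and 3, and both proofs ultimately rest on the inequality $\F(\mu)<\F^{\infty}(\mu)$ established in the existence theorem.
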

		
		\begin{proof}
			Let $U=(u_1,u_2)$ be a ground state of $F$ at mass $\mu$ and $\la_i>0$ for $i=1,2$, so that $u_i=\phi_i+q_i\G_{\la_i}$. Since $U$ solves \eqref{eq:stat-detailed}, there results that
			
   \begin{equation*}
       \begin{cases}
					\phi_{1}(0)&=  (\sigma_1 + \theta_{\lambda_1})q_1-\beta\,q_2,\\
					\phi_{2}(0)&=-\beta\,q_1 + (\sigma_2 + \theta_{\lambda_2})q_2.
			\end{cases}
    \end{equation*}
    
			\emph{Step 1. $u_i\not\equiv 0$ for $i=1,2$.} Let us first prove that $u_1\not\equiv 0$. Suppose by contradiction that $u_1\equiv 0$. Therefore, since $\beta>0$, it follows that $q_2=0$, hence $u_2\in H^1(\R^2)$.
		Therefore, since $U$ is a ground state of $F$ at mass $\mu$, by Step 2 in Theorem \ref{thm:existence} it holds that 
			\begin{equation*}
			 \F(\mu)=F(U)= F_{2}(u_2)= \Eps_{p_2}(\mu)\geq\mathcal{F}^{\infty}(\mu)>\F(\mu),    \end{equation*}
			which is a contradiction. Analogously, one can prove that $u_2\not\equiv 0$.\\
   
   \emph{Step 2. $q_i\neq 0$ for $i=1,2$.}
	Let us first prove that $q_1\neq 0.$ If we suppose by contradiction that $q_1=0$, then 
			\[
			F(U)= E_{p_1}(u_1) + F_{2}(u_2)=\inf_{(u_1,u_2)\in \La_\mu} \{E_{p_1}(u_1)+F_2(u_2)\}.
			\]
			In particular, since the problems on the two planes are decoupled in this case, one can argue as in the proof of Lemma \ref{lem:F-inf} and prove that
			\begin{equation}
            \label{eq:inf-F-m-mu}
			F(U)=\inf_{(u_1,u_2)\in \La_\mu} \{E_{p_1}(u_1)+F_2(u_2)\}= \inf_{m\in[0,\mu]} \{\mathcal{E}_{p_1} (m) + \F_{2}(\mu-m) \}.    
			\end{equation}

			Since both $\mathcal{E}_{p_1}$ and $\F_{2}$ are strictly concave functions of the mass, then also the function $m\mapsto\mathcal{E}_{p_1} (m) + \F_{2}(\mu-m)$ is strictly concave in $[0,\mu]$ and attains its minimum at $m=0$ or $m=\mu$.  If one considers $U=(u_1,u_2)$ with $u_i\not\equiv 0$ for $i=1,2$, then $U$ cannot realize the infimum in \eqref{eq:inf-F-m-mu}: indeed, if this is the case, then by strict concavity of the function $m\mapsto\mathcal{E}_{p_1} (m) + \F_{2}(\mu-m)$ there exists a function $\widetilde{U}$ with either $u_1\equiv 0$ or $u_2\equiv 0$ with energy strictly less than the energy of $U$, contradicting the hypothesis that $U$ is a ground state of $F$. Therefore, it holds that either $u_1\equiv 0$ or $u_2\equiv 0$, but this is in contradiction with Step 1. In the same way, it is possible to prove that $q_2\neq 0$, concluding the proof of Step 2.\\

   \emph{Step 3. $\phi_i\not \equiv 0$ for $i=1,2$.}
			Let us prove first that $\phi_{1}\not\equiv 0$. Indeed, if we suppose by contradiction that $\phi_{1}\equiv 0$, then by the first equation in \eqref{eq:stat-detailed} we have that 
			\begin{align}
			  &\omega-\la_1=|q_1|^{p_1-2}\mathcal{G}_{\lambda_{1}}^{p_1-2}(x),\qquad \, \forall\, x\in\R^2\setminus\{0\},
			\end{align}
			but this is a contradiction since the left-hand side is constant, while the right-hand side is a multiple of $\mathcal{G}_{\lambda_1}$, hence $\phi_{1}\not\equiv 0.$ Analogously, one can show that $\phi_{2}\not\equiv 0.$\\
   
\emph{Step 4. $q_i>0$ for $i=1,2$, up to a phase factor.} First of all, let us observe that $q_1=|q_1|e^{i\varphi_1}$ and $q_2=|q_2|e^{i\varphi_2}$ share the same phase, i.e. $e^{i\varphi_1}=e^{i\varphi_2}$. Indeed, suppose by contradiction that $e^{i\varphi_1}\neq e^{i\varphi_2}$. Then the function $U_\varphi=(u_1,e^{i(\varphi_1-\varphi_2)}u_2)$ satisfies $\|U_\varphi\|_2^2=\|U\|_2^2$ and
\begin{equation*}
\begin{split}
F(U_\varphi) &= F_1(u_1)+F_2\left(u_2 e^{i(\varphi_1-\varphi_2)}\right)-\beta \Re\{q_1\overline{q_2}e^{-i(\varphi_1-\varphi_2)}\}\\
&=F_1(u_1)+F_2(u_2)-\beta|q_1||q_2|\\
&<F_1(u_1)+F_2(u_2)-\beta\Re\{q_1\overline{q_2}\}=F(U),
\end{split}
\end{equation*}
which contradicts the fact that $U$ is a ground state of $F$ at mass $\mu$. Now, if $q_i$ are not positive, i.e. $q_i=|q_i|e^{i\varphi}$ with $e^{i\varphi}\neq 1$, then it is sufficient to observe that the function $e^{-i\varphi}U$ has positive charges $q_i$ and satisfies $\|e^{-i\varphi}U\|_2^2=\mu$ and $F\left(e^{-i\varphi}U\right)=F(U)$, thus without loss of generality we can suppose that $q_i>0$ for $i=1,2$. 
\end{proof}
		In order to prove Theorem \ref{thm:charact}, we follow the strategy introduced in \cite{ABCT} and \cite{ABCT-24}, where the properties of ground states are proved switching to alternative minimization problems and relying on the connection between them and the original problem itself. In particular, we first introduce the action functional $S_{\omega}$. Given $\omega\in\R$, the action functional at frequency $\omega$ is the functional $S_{\omega}:\La\to \R$ defined by
		\begin{equation}
			\label{eq:action}
			S_{\omega}(U)=F(U) + \frac{\omega}{2}\norma{U}_{L^2(\I)}^2,
		\end{equation}
        where $L^2(\mathcal{I}):=L^2(\Rd)\oplus L^2(\Rd).$  Denoted by $I_{\omega}:\La\to \R$ the functional
		\begin{equation}
			\label{eq:I-om}
			I_{\omega}(U)=Q(U) + \omega \norma{U}_{L^2(\mathcal{I})}^2 - \norma{u_1}_{L^{p_1}(\R^2)}^{p_1} - \norma{u_2}_{L^{p_2}(\R^2)}^{p_2},
   \end{equation}
   one defines the Nehari's manifold at frequency $\omega$ as 
		\[
		N_{\omega}:=\{U\in \La\setminus\{0\} : I_{\omega}(U)=0\},
		\]
		
		Let us also introduce
		\begin{equation}
			Q_{\omega}(U):= Q(U) + \omega \norma{U}_{L^2(\mathcal{I})}^2.
		\end{equation}
		
		\begin{definition}
			A function $U$ belonging to the Nehari manifold $N_{\omega}$ and satisfying 
			\[
			S_{\omega}(U) = d(\omega):= \inf_{V\in N_{\omega}}S_{\omega}(V)
			\]
			is called a minimizer of the action at frequency $\omega$.
		\end{definition}
		Let us highlight that also minimizers of the action at frequency $\omega$ satisfy \eqref{eq:stat}. In the next lemma, we state a connection between ground states at fixed mass and minimizers of the action at fixed frequency. The proof is analogous to the case of the standard NLS \cite{DST-23, JL-22}. Before stating it, let us define by 
 \begin{equation}
 \omega^\star:=-\inf_{U\in \Lambda\setminus\{0\}}\frac{Q(U)}{\|U\|_2^2}.   
 \end{equation}

		\begin{lemma}
			\label{lem:colleg}
			Let $\mu>0$. If $U$ is a ground state of $F$ at mass $\mu$, then $U$ is a minimizer of the action at frequency
			\begin{equation}
				\label{eq:omega}
				\omega=\mu^{-1}\Bigl(\norma{u_1}^{p_1}_{L^{p_1}(\R^2)} + \norma{u_2}^{p_2}_{L^{p_2}(\R^2)} -Q(U) \Bigl) > \omega^\star.
			\end{equation}
		\end{lemma}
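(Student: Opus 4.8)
The plan is to run the standard comparison between the constrained (fixed-mass) and the free (fixed-frequency) problems, the only genuinely hybrid-specific point being the strict lower bound on the multiplier.

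First I would produce the multiplier. Since $U$ minimizes $F$ on the smooth manifold $\{V\in\La:\|V\|_2^2=\mu\}$ and the constraint has non-vanishing differential at $U\neq 0$, the Lagrange multiplier rule yields $\omega\in\R$ with $H U+g(U)+\omega U=0$, i.e. \eqref{eq:stat}. Pairing this equation with $U$ and using $\langle g(U),U\rangle=-\|u_1\|_{L^{p_1}(\Rd)}^{p_1}-\|u_2\|_{L^{p_2}(\Rd)}^{p_2}$ gives
\[
Q(U)+\omega\mu-\|u_1\|_{L^{p_1}(\Rd)}^{p_1}-\|u_2\|_{L^{p_2}(\Rd)}^{p_2}=0 ,
\]
which is at once the formula \eqref{eq:omega} for $\omega$ (recall $\|U\|_2^2=\mu$) and the statement $I_\omega(U)=0$, so that $U\in N_\omega$.

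Second, I would prove $S_\omega(U)=d(\omega)$ by a scaling comparison. Fix any $Z=(z_1,z_2)\in N_\omega$, set $m:=\|Z\|_2^2>0$, $c:=\sqrt{\mu/m}$ and $Z_\mu:=cZ\in\La_\mu$. Because $U$ is a ground state at mass $\mu$, $F(Z_\mu)\geq\F(\mu)=F(U)$. Using the Nehari identity $Q_\omega(Z)=\|z_1\|_{L^{p_1}(\Rd)}^{p_1}+\|z_2\|_{L^{p_2}(\Rd)}^{p_2}$ one computes
\[
S_\omega(Z)-\Bigl(F(Z_\mu)+\tfrac{\omega}{2}\mu\Bigr)=\sum_{i=1,2}\Bigl(\tfrac{1-c^2}{2}-\tfrac{1-c^{p_i}}{p_i}\Bigr)\|z_i\|_{L^{p_i}(\Rd)}^{p_i}.
\]
Since for every $p>2$ the function $c\mapsto \tfrac{1-c^2}{2}-\tfrac{1-c^{p}}{p}$ vanishes at $c=1$ and attains there its global minimum (its derivative is $c(c^{p-2}-1)$), each bracket is nonnegative; hence $S_\omega(Z)\geq F(Z_\mu)+\tfrac{\omega}{2}\mu\geq F(U)+\tfrac{\omega}{2}\mu=S_\omega(U)$. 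As $Z\in N_\omega$ was arbitrary and $U\in N_\omega$, this gives $S_\omega(U)=\inf_{N_\omega}S_\omega=d(\omega)$. This algebraic step is clean and is where $p_i>2$ is used.

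Third — and this is the part I expect to be the real obstacle — comes the strict bound $\omega>\omega^\star$. From $U\in N_\omega$ one only gets $Q(U)/\|U\|_2^2>-\omega$, which does \emph{not} by itself compare $-\omega$ with the infimum $-\omega^\star=\inf_V Q(V)/\|V\|_2^2$, since $U$ is not a minimizer of the Rayleigh quotient. The plan is to test the weak form of \eqref{eq:stat} against the positive principal eigenfunction $\Phi_\star$ of $H$ at the bottom of its spectrum: the attractivity of the point interactions guarantees that $-\omega^\star$ is a (simple) eigenvalue with $\Phi_\star>0$ and $Q(\Phi_\star,\cdot)=-\omega^\star\langle\Phi_\star,\cdot\rangle$. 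Then
\[
(\omega-\omega^\star)\langle U,\Phi_\star\rangle=\sum_{i=1,2}\int_{\Rd}|u_i|^{p_i-2}u_i\,\Phi_{\star,i}\,dx ,
\]
and, exploiting the sign information on $U$ (the charges $q_i>0$ from Proposition \ref{prop:qi>0} together with the positivity of $U$), both sides are strictly positive, forcing $\omega>\omega^\star$. An alternative route avoids the spectral input and argues through the strict concavity of $\mu\mapsto\F(\mu)$: the envelope relation $\F'(\mu)=-\omega/2$ combined with $\F'(0^+)\leq-\omega^\star/2$ — obtained by evaluating $\F$ on $t\Phi_\star$ and letting $t\to0$, the nonlinear contribution being $o(t^2)$ — yields $\omega=-2\F'(\mu)>\omega^\star$ by strict concavity. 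The delicate points to secure in either route are, respectively, the existence and positivity of $\Phi_\star$ together with the correct sign of the overlap $\langle U,\Phi_\star\rangle$, or the differentiability and strict concavity of $\F$.
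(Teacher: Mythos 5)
Your Steps 1--2 are correct and are essentially the paper's own argument: the paper also obtains $I_\omega(U)=0$ by pairing the Euler--Lagrange equation \eqref{eq:stat} with $U$, and then compares $S_\omega$ along the mass-$\mu$ rescaling of an arbitrary element of the Nehari manifold, using exactly your elementary fact that on $N_\omega$ the map $c\mapsto S_\omega(cZ)$ is maximized at $c=1$ (the paper phrases this as a contradiction, you run it forwards; the content is identical).

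The problem is Step 3, where you misidentify the difficulty: the bound $\omega>\omega^\star$ is elementary and needs neither a principal eigenfunction $\Phi_\star$ nor concavity/differentiability of $\F$. Since $F(V)=\tfrac12 Q(V)-\sum_i\tfrac{1}{p_i}\|v_i\|_{L^{p_i}(\R^2)}^{p_i}\leq\tfrac12 Q(V)$ for every $V\in\La_\mu$, one has the \emph{non-strict} bound $2\F(\mu)\leq\inf_{V\in\La_\mu}Q(V)=-\omega^\star\mu$, which requires no attainment of the linear infimum. On the other hand, for the ground state itself,
\begin{equation}
-\omega\mu=Q(U)-\sum_{i=1,2}\|u_i\|_{L^{p_i}(\R^2)}^{p_i}=2F(U)-\sum_{i=1,2}\frac{p_i-2}{p_i}\|u_i\|_{L^{p_i}(\R^2)}^{p_i}<2\F(\mu),
\end{equation}
strictly, because $U\neq0$ forces $\sum_i\|u_i\|_{L^{p_i}(\R^2)}^{p_i}>0$ and $p_i>2$. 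Chaining the two inequalities gives $-\omega<-\omega^\star$. This two-line computation is precisely how the paper concludes; note that only the strictness of the \emph{first} inequality matters.

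By contrast, both of your proposed routes have genuine gaps in this paper's framework. Route (a) is circular: it uses the positivity of $U$ as a function, but at this stage only Proposition \ref{prop:qi>0} is available, which gives $u_i\not\equiv0$ and $q_i>0$; the positivity of the $u_i$ is proved in Theorem \ref{thm:charact}, whose proof relies on Lemma \ref{lem:colleg} itself (through Lemma \ref{lem:equiv-Aom} and the action framework). Without a sign on $U$, neither $\langle U,\Phi_\star\rangle$ nor the nonlinear pairing has a definite sign. Moreover, route (a) needs the bottom of the spectrum of $H$ to be an attained, simple eigenvalue with positive eigenfunction, which is nowhere established in the paper. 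Route (b) invokes the envelope identity $\F'(\mu)=-\omega/2$, differentiability of $\F$ at $\mu$, and strict concavity of the coupled level $\F$; none of these is available (the paper only records strict concavity of the decoupled levels $\F_1,\F_2$ via \cite[Lemma 4.2]{ABCT-24}), and each would require a separate nontrivial proof. So Steps 1--2 stand, but Step 3 as written does not close, and the elementary algebraic argument above is what actually finishes the lemma.
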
 
		\begin{proof} 
			Let $U=(u_1,u_2)$ be a ground state of $F$ at mass $\mu$ and $\omega>0$ be the associated Lagrange multiplier, given by \eqref{eq:omega}. Assume by contradiction that there exists $V=(v_1,v_2)\in N_{\omega}$ such that $S_{\omega}(V)<S_{\omega}(U)$ and let $\tau>0$  be such that $||\tau V||_{L^2(\R^2)}^2=\mu$. Then
			\begin{align}
				S_{\omega}(\tau V)=
				\frac{\tau^2}{2} Q_{\omega}(V) - \frac{\tau^{p_1}}{p_1} \norma{v_1}_{L^{p_1}(\R^2)}^{p_1}- \frac{\tau^{p_2}}{p_2} \norma{v_2}_{L^{p_2}(\R^2)}^{p_2}.
			\end{align}
			Computing the derivative with respect to $\tau$ and using that $V\in N_{\omega}$, we get
			\begin{equation*}
				\frac{d}{d\tau} S_{\omega}(\tau V)=\tau\left[(1-\tau^{p_1-2})\norma{v_1}_{p_1}^{p_1} + (1-\tau^{p_2-2})\norma{v_2}_{p_2}^{p_2}\right],
			\end{equation*}
			which is greater than equal to zero if and only if $0<\tau\leq 1$. Hence $S_{\omega}(\tau V)\leq S_{\omega}(V)<S_{\omega}(U)$ for every $\tau>0$ which, combined with $||\tau V||_{2}^2=\norma{U}_{2}^2=\mu$, entails that $F(\tau V)<F(U)$. Since this contradicts the fact that $U$ is a ground state of $F$ at mass $\mu$, it follows that $U$ is a minimizer of the action at frequency $\omega$.
   
  \noindent Finally, \eqref{eq:omega} follows from
			\begin{align}
				-\omega &= \frac{Q(U) - \norma{u_1}^{p_1}_{p_1} - \norma{u_2}^{p_2}_{p_2}}{\mu}=\frac{2{F(U)} - \frac{p_1-2}{p_1}\norma{u_1}_{p_1}^{p_1} - \frac{p_2-2}{p_2}\norma{u_2}_{p_2}^{p_2}}{\mu}\\ 
				&<2\frac{\F(\mu)}{\mu}< \inf_{U\in \La_\mu}\frac{Q(U)}{\mu}=- \omega^\star.
			\end{align}
		\end{proof}
		Let us now introduce the functionals $\widetilde{S}, A_\om, B_\om:\La\to\R$, defined as
		\begin{align}
			\widetilde{S}(U)& := \frac{p_1-2}{2p_1} \norma{u_1}^{p_1}_{L^{p_1}(\R^2)} + \frac{p_2-2}{2p_2} \norma{u_2}^{p_2}_{L^{p_2}(\R^2)},\\
			A_{\omega}(U)&:=\frac{p_1-2}{2p_1} Q_{\omega}(U) + \frac{p_2-p_1}{p_1p_2} \norma{u_2}^{p_2}_{L^{p_2}(\R^2)},\\
		B_{\omega}(U)&:= \frac{p_2-2}{2p_2} Q_{\omega}(U) + \frac{p_1-p_2}{p_1p_2} \norma{u_1}^{p_1}_{L^{p_1}(\R^2)} ,
		\end{align}
		and observe that, by using \eqref{eq:I-om},  the action can be rewritten alternatively as 
		\[
		S_{\omega}(U)=\frac{1}{2} I_{\omega}(U) + \widetilde{S}(U) =\frac{1}{p_1}I_{\omega}(U) + A_{\omega}(U)=\frac{1}{p_2}I_{\omega}(U) + B_{\omega}(U),
		\]
		so that 
		\[
		d(\omega)=\inf_{U\in N_{\omega}}\widetilde{S}(U)=\inf_{U\in N_{\omega}} A_{\omega}(U)=\inf_{U\in N_{\omega}} B_{\omega}(U).
		\]
    
    In the following lemmas, we provide other equivalent formulations for $d(\omega)$ involving the three functionals $\widetilde{S}$, $A_\omega$ and $B_\omega$ subject to other constraints. For the proofs, we refer to \cite[Lemma 5.6-5.7]{ABCT-24}, where the same results were proved in an analogous setting.
		\begin{lemma}
			Let $p_i>2$ for $i=1,2$ and $\omega>\omega^\star$. Then, denoted with 
   \[
			\widetilde{N}_{\omega}:=\{V\in \La\setminus\{0\},\, I_{\omega}(V)\leq 0\},
			\]
   there results that
			\[
			d(\omega)=\inf_{V\in \widetilde{N_{\omega}}}\widetilde{S}(V).
			\]
			
			Moreover, for any function $U\in \La\setminus\{0\}$, it holds that
			\[	\begin{cases}
				\widetilde{S}(U)=d(\omega)\\
				I_{\omega}(U)\leq 0
			\end{cases}
			\iff\qquad
			\begin{cases}
				S_{\omega}(U)=d(\omega)\\
				I_{\omega}(U)=0.
			\end{cases}
			\]
		\end{lemma}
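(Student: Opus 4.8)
The plan is to build everything on the decomposition $S_\omega=\tfrac12 I_\omega+\widetilde S$ recorded just above the statement---so that $S_\omega=\widetilde S$ precisely on $N_\omega$---together with the positivity of $Q_\omega$ on $\La\setminus\{0\}$ guaranteed by $\omega>\omega^\star$. Indeed, the definition $\omega^\star=-\inf_{U\neq0}Q(U)/\|U\|_2^2$ gives $Q(U)+\omega^\star\|U\|_2^2\ge0$, whence $Q_\omega(U)\ge(\omega-\omega^\star)\|U\|_2^2>0$ for every $U\in\La\setminus\{0\}$. The central tool will be a single-parameter scaling $V\mapsto\tau V$ that projects an element of $\widetilde N_\omega$ onto the Nehari manifold $N_\omega$, after which all comparisons reduce to the monotonicity in $\tau$ of elementary power functions.

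Concretely, I would first fix $V=(v_1,v_2)\in\La\setminus\{0\}$ and study the scalar map
\[
h(\tau):=Q_\omega(V)-\tau^{p_1-2}\|v_1\|_{L^{p_1}(\Rd)}^{p_1}-\tau^{p_2-2}\|v_2\|_{L^{p_2}(\Rd)}^{p_2},\qquad \tau>0,
\]
which encodes the Nehari functional through $I_\omega(\tau V)=\tau^2 h(\tau)$. Since $p_i>2$, $h$ is continuous and strictly decreasing as soon as at least one $v_i\not\equiv0$, and $h(0^+)=Q_\omega(V)>0$. When $V\in\widetilde N_\omega$, the constraint $I_\omega(V)\le0$ both forces some $v_i\not\equiv0$ (otherwise $I_\omega(V)=Q_\omega(V)>0$) and yields $h(1)=I_\omega(V)\le0$; hence there is a unique $\tau_0\in(0,1]$ with $h(\tau_0)=0$, i.e.\ $\tau_0V\in N_\omega$, with $\tau_0=1$ exactly when $I_\omega(V)=0$.

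With this projection in hand, the first identity follows quickly. From $N_\omega\subset\widetilde N_\omega$ and the already-established $d(\omega)=\inf_{N_\omega}\widetilde S$ one gets $\inf_{\widetilde N_\omega}\widetilde S\le d(\omega)$. For the reverse inequality I take $V\in\widetilde N_\omega$, produce $\tau_0\in(0,1]$ as above, and use that
\[
\widetilde S(\tau V)=\tau^{p_1}\,\tfrac{p_1-2}{2p_1}\|v_1\|_{L^{p_1}(\Rd)}^{p_1}+\tau^{p_2}\,\tfrac{p_2-2}{2p_2}\|v_2\|_{L^{p_2}(\Rd)}^{p_2}
\]
is nondecreasing in $\tau$, so that $\widetilde S(V)\ge\widetilde S(\tau_0V)=S_\omega(\tau_0V)\ge d(\omega)$, the middle equality holding because $\tau_0V\in N_\omega$. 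Taking the infimum over $V$ gives $\inf_{\widetilde N_\omega}\widetilde S\ge d(\omega)$, hence equality.

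For the equivalence, the implication $(\Leftarrow)$ is immediate: if $I_\omega(U)=0$ then $U\in N_\omega$, so $\widetilde S(U)=S_\omega(U)=d(\omega)$ and $I_\omega(U)\le0$ trivially. For $(\Rightarrow)$ I would argue by contradiction: assuming $\widetilde S(U)=d(\omega)$ but $I_\omega(U)<0$, the associated $\tau_0$ is strictly less than $1$, and since $U\neq0$ together with $I_\omega(U)\le0$ rules out $u_1\equiv u_2\equiv0$ and thus forces $\widetilde S(U)>0$, the strict monotonicity yields $\widetilde S(\tau_0U)<\widetilde S(U)=d(\omega)$; but $\tau_0U\in N_\omega$ gives $\widetilde S(\tau_0U)=S_\omega(\tau_0U)\ge d(\omega)$, a contradiction, so $I_\omega(U)=0$ and then $S_\omega(U)=\widetilde S(U)=d(\omega)$. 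The step I expect to be the crux is the projection itself: one must ensure that a single dilation parameter $\tau_0\in(0,1]$ exists and is unique even though the two components carry different powers $p_1\neq p_2$, and this is exactly where $\omega>\omega^\star$ is indispensable, as it is what makes $h(0^+)=Q_\omega(V)>0$ and thereby guarantees the sign change on $(0,1]$.
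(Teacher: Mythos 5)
Your proof is correct: the coercivity $Q_\omega(V)\geq(\omega-\omega^\star)\|V\|_2^2>0$ on $\La\setminus\{0\}$, the projection of any $V\in\widetilde{N}_\omega$ onto $N_\omega$ via the unique $\tau_0\in(0,1]$ with $I_\omega(\tau_0V)=0$ (with $\tau_0=1$ iff $I_\omega(V)=0$), and the strict monotonicity of $\tau\mapsto\widetilde{S}(\tau V)$ together yield both $d(\omega)=\inf_{\widetilde{N}_\omega}\widetilde{S}$ and the stated equivalence, and all the auxiliary facts you use (e.g.\ that $I_\omega(V)\leq 0$ and $V\neq 0$ force $\widetilde{S}(V)>0$) are justified. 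The paper gives no in-text proof of this lemma, deferring instead to \cite[Lemmas 5.6--5.7]{ABCT-24}, and your scaling-projection argument is precisely the standard route taken in that reference, so your proposal matches the intended approach.
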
 
		\begin{lemma}
			\label{lem:equiv-Aom}
			Let $p_i>2$ for $i=1,2$ and  $\omega>\omega^\star$. Then, denoted with
			\[
			\Pi_\om:=\{V\in \La: \widetilde{S}(V)=d(\omega)\},
			\]
			there results that
			\[
			d(\omega)=\inf_{V\in \Pi_{\omega}} A_{\omega}(V)=\inf_{V\in \Pi_{\omega}} B_{\omega}(V).
			\]
			In particular, for every $U\in \La\setminus\{0\}$, it holds that 
			\[\begin{cases}
				A_{\omega}(U)=d(\omega)\\
				U\in \Pi_{\omega}
			\end{cases}
			\iff\qquad
			\begin{cases}
				B_{\omega}(U)=d(\omega)\\
				U\in \Pi_{\omega}
			\end{cases}
                \iff\qquad
			\begin{cases}
				S_{\omega}(U)=d(\omega)\\
				I_{\omega}(U)=0.
			\end{cases}
			\]
		\end{lemma}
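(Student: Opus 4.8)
The plan is to reduce everything to the three decompositions of the action recalled immediately before the statement. Combining $S_{\omega}(U)=\frac12 I_{\omega}(U)+\widetilde S(U)$ with $S_{\omega}(U)=\frac{1}{p_1}I_{\omega}(U)+A_{\omega}(U)$ and $S_{\omega}(U)=\frac{1}{p_2}I_{\omega}(U)+B_{\omega}(U)$ gives, for every $U\in\La\setminus\{0\}$,
\[
A_{\omega}(U)=\widetilde S(U)+\frac{p_1-2}{2p_1}I_{\omega}(U),\qquad B_{\omega}(U)=\widetilde S(U)+\frac{p_2-2}{2p_2}I_{\omega}(U),
\]
so that on $\Pi_{\omega}=\{\widetilde S=d(\omega)\}$ one has $A_{\omega}(U)=d(\omega)+\frac{p_1-2}{2p_1}I_{\omega}(U)$ and $B_{\omega}(U)=d(\omega)+\frac{p_2-2}{2p_2}I_{\omega}(U)$. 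Since $p_i>2$, both correction coefficients are strictly positive, so the whole lemma reduces to controlling the sign of $I_{\omega}$ on $\Pi_{\omega}$.

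First I would show that $I_{\omega}\geq 0$ on $\Pi_{\omega}$. If some $U\in\Pi_{\omega}$ had $I_{\omega}(U)<0$, then $U\in\widetilde N_{\omega}$ with $\widetilde S(U)=d(\omega)=\inf_{\widetilde N_{\omega}}\widetilde S$ (the content of the preceding lemma), so $U$ would minimize $\widetilde S$ over $\widetilde N_{\omega}$; the equivalence stated there then forces $I_{\omega}(U)=0$, a contradiction. Once $I_{\omega}\geq 0$ on $\Pi_{\omega}$ is known, the two identities immediately yield $A_{\omega}\geq d(\omega)$ and $B_{\omega}\geq d(\omega)$ throughout $\Pi_{\omega}$, that is $\inf_{\Pi_{\omega}}A_{\omega}\geq d(\omega)$ and $\inf_{\Pi_{\omega}}B_{\omega}\geq d(\omega)$.

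For the reverse inequality I would produce, by rescaling, a sequence in $\Pi_{\omega}$ along which $A_{\omega}\to d(\omega)$. Starting from a minimizing sequence $(U_n)\subset\widetilde N_{\omega}$ for $\widetilde S$, so that $I_{\omega}(U_n)\leq 0$ and $\widetilde S(U_n)\to d(\omega)$, I note that $t\mapsto\widetilde S(tU_n)$ is strictly increasing on $(0,+\infty)$ while $\widetilde S(U_n)\geq d(\omega)$; hence there is a unique $t_n\leq 1$ with $t_nU_n\in\Pi_{\omega}$, and $t_n\to 1$. The quantities $\|u_{1,n}\|_{p_1}^{p_1}$ and $\|u_{2,n}\|_{p_2}^{p_2}$ are bounded because $\widetilde S(U_n)$ converges, and $Q_{\omega}(U_n)\leq\|u_{1,n}\|_{p_1}^{p_1}+\|u_{2,n}\|_{p_2}^{p_2}$ is bounded since $I_{\omega}(U_n)\leq 0$; therefore $I_{\omega}(t_nU_n)=I_{\omega}(U_n)+o(1)\leq o(1)$, and together with $I_{\omega}(t_nU_n)\geq 0$ this forces $I_{\omega}(t_nU_n)\to 0$. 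The identity then gives $A_{\omega}(t_nU_n)\to d(\omega)$, and symmetrically for $B_{\omega}$, establishing $\inf_{\Pi_{\omega}}A_{\omega}=\inf_{\Pi_{\omega}}B_{\omega}=d(\omega)$.

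Finally, the three-way equivalence is read off the same identities using $\frac{p_i-2}{2p_i}>0$: if $U\in\Pi_{\omega}$ and $A_{\omega}(U)=d(\omega)$, then $I_{\omega}(U)=0$, whence $S_{\omega}(U)=\frac12 I_{\omega}(U)+\widetilde S(U)=d(\omega)$; conversely, $S_{\omega}(U)=d(\omega)$ with $I_{\omega}(U)=0$ gives $\widetilde S(U)=d(\omega)$, i.e. $U\in\Pi_{\omega}$, and $A_{\omega}(U)=d(\omega)$. The chain for $B_{\omega}$ is identical with $p_1$ replaced by $p_2$, which closes the equivalence. I expect the only genuinely delicate point to be the first step, namely excluding $I_{\omega}<0$ on $\Pi_{\omega}$: it is exactly there that the preceding lemma---both the identity $d(\omega)=\inf_{\widetilde N_{\omega}}\widetilde S$ and its rigidity statement---must be invoked, whereas the rescaling estimates are routine once the relevant norms are seen to be bounded.
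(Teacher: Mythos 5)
There is no proof in the paper to compare yours against: for this lemma (and the one preceding it) the authors give no argument at all, referring instead to \cite[Lemma 5.6-5.7]{ABCT-24}, where the analogous statements are proved. So your proposal must be judged on its own, and it is essentially correct; it is also the natural argument and very likely mirrors the cited one. The identities $A_\omega=\widetilde S+\tfrac{p_1-2}{2p_1}I_\omega$ and $B_\omega=\widetilde S+\tfrac{p_2-2}{2p_2}I_\omega$ are exact, the exclusion of $I_\omega<0$ on $\Pi_\omega$ is a correct application of the rigidity part of the preceding lemma (note that $I_\omega(U)<0$ forces $U\neq 0$, since $I_\omega(0)=0$, so that lemma indeed applies), and the three-way equivalence then follows purely algebraically from $\tfrac{p_i-2}{2p_i}>0$, exactly as you say.

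The one step you should flag explicitly: the claim $t_n\to 1$ in the rescaling argument silently uses $0<d(\omega)<+\infty$. From $t_n\le 1$ one gets $d(\omega)=\widetilde S(t_nU_n)\le t_n^{\min(p_1,p_2)}\,\widetilde S(U_n)$, hence $t_n^{\min(p_1,p_2)}\ge d(\omega)/\widetilde S(U_n)\to 1$ \emph{provided} $d(\omega)>0$; if $d(\omega)=0$ the rescaled sequence could collapse to the origin (in that degenerate case $\Pi_\omega=\{0\}$ and the statement is trivially true, but your argument as written breaks). Finiteness of $d(\omega)$ is needed even earlier, to guarantee $\widetilde N_\omega\neq\emptyset$ and the existence of a minimizing sequence. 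Both facts are standard for $\omega>\omega^\star$ in this framework---they follow from $N_\omega\neq\emptyset$ together with the modified Gagliardo--Nirenberg inequality \eqref{eq:GN-noja}, and are among the ingredients this paper imports from \cite{ABCT-24}---but since your proof is otherwise self-contained, they should be either cited or proved. With that caveat, the proof stands.
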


We are now ready to prove Theorem \ref{thm:charact}.
  
\begin{proof}[Proof of Theorem \ref{thm:charact}]

Let us focus first on the proof of \eqref{eq:Fmu=F12mu} and \eqref{eq:Fmu<F12mu}, when $\beta=0$ and $\beta>0$ respectively. In particular, if $\beta=0$, then  
	\[
	\F(\mu)=\inf_{U\in\La_\mu} \left\{F_1(u) + F_2(v)\right\}=\inf_{t\in[0,\mu]} \left\{\F_{1}(t) + \F_{2}(\mu-t)\right\},
    \]
hence by applying \eqref{eq:inf-t-F1F2} we deduce \eqref{eq:Fmu=F12mu} and the fact that every ground state is either of the form $U=(u_1,0)$ or of the form $U=(0,u_2)$, depending on which is the minimum between $\F_1(\mu)$ and $\F_2(\mu)$: this entails in particular $(i)$, $(ii)$ and $(iii)$ and the properties of the non-zero component.  

If instead $\beta>0$, then
\[
	\F(\mu)=\inf_{U\in\La_\mu} \left\{F_1(u) + F_2(v)-\beta|q_1||q_2|\}\right\}\leq\inf_{t\in[0,\mu]} \left\{\F_{1}(t) + \F_{2}(\mu-t)\right\},
    \]
thus by \eqref{eq:inf-t-F1F2} we get that
\begin{equation*}
    \F(\mu)\leq \min\{\F_{1}(\mu), \F_{2}(\mu)\}.
\end{equation*}

In order to prove \eqref{eq:Fmu<F12mu}, it is sufficient to exclude that $\F(\mu)=\min\{\F_1(\mu), \F_2(\mu)\}$. If we suppose by contradiction that $\F(\mu)=\min\{\F_1(\mu), \F_2(\mu)\}$, then there exist also ground states either of the form $U=(u_1,0)$ or of the form $U=(0,u_2)$. But this contradicts Proposition \ref{prop:qi>0}, hence \eqref{eq:Fmu<F12mu} holds. 

Fix now $\beta>0$ and let $U=(u_1,u_2)$ be a ground state of $F$ at mass $\mu$: its existence is ensured by Theorem \ref{thm:existence}. By proposition \ref{prop:qi>0}, we deduce that the associated charges $q_i$ satisfy $q_i>0$ for $i=1,2$, up to multiplication by a phase factor. Moreover, by Lemma \ref{lem:colleg} $U$ is also a minimizer of the action at frequency $\omega$, with $\omega>\omega^\star$. 

Let us now consider the decomposition of $u_i$ corresponding to the Lagrange multiplier $\omega$, namely $u_i=\phi_{i,\omega}+q_i\G_\omega$. Since the proof of the positivity and the radiality of $\phi_{i,\omega}$ and $u_i$ is quite long, we divide it into three steps. \\

\textit{Step 1. Positivity of $\phi_{i,\omega}$ and $u_i$}. We first prove that $\phi_{1,\omega}>0$. Let us start defining the two sets
			\[
			\Omega := \{x\in\R^2\setminus\{0\}:\phi_{1,\omega}(x)\neq 0\}
			\]
            and 
            \[
   \widehat{\Omega}:=\{x\in\Omega : \al(x)\neq 0\},
   \]
   where $\alpha:\Omega\to [0,2\pi)$ is such that $\phi_{1,\omega}(x)=e^{i\alpha(x)}|\phi_{1,\omega}(x)|$.
			To prove that $\phi_{1,\omega}>0$, we are reduced to prove that $\Omega=\Rd$ and $|\widehat{\Omega}|=0$. 
			
            We start proving that $|\widehat{\Omega}|=0$. Assume by contradiction that $\abs{\widehat{\Omega}}>0$ and introduce the function 
   \[
   U_\tau:=(u_{1,\tau},u_2),\quad \text{with}\quad  u_{1,\tau}:=\tau\abs{\phi_{1,\om}}+q_1\G_\om.
   \]
			 Let us observe that, since $\|\na\abs{\phi_{1,\om}}\|_2^2\leq \|\na\phi_{1,\om}\|_2^2$, then for every $\tau\in (-1,1)$
\begin{align}
			Q_{\omega}(U_{\tau})&=\tau^2\left(\norma{\nabla \abs{\phi_{1,\omega}}}_{L^2(\R^2)}^2 + \omega \norma{\phi_{1,\omega}}_2^2\right)  + \abs{q_1}^2(\sigma_1 + \theta_{\om})+Q_{\sigma_2}(u_2) + \omega \norma{u_2}_2^2-\beta\, q_1 \,q_2\\
   &<\norma{\nabla \abs{\phi_{1,\omega}}}_{L^2(\R^2)}^2 + \omega \norma{\phi_{1,\omega}}_2^2  + \abs{q_1}^2(\sigma_1 + \theta_{\om})+Q_{\sigma_2}(u_2) + \omega \norma{u_2}_2^2-\beta\, q_1 \,q_2\\
   &=Q_{\omega}(U_{1})\leq Q_\omega(U).
\end{align}
   which entails in particular that 
   \begin{equation}
   \label{contr:lem-pos}
    A_\omega(U_\tau)<A_\omega(U_1)\leq A_\omega(U)\quad\forall\,\tau\in(-1,1).
 \end{equation}
 
\noindent We claim (and prove in Step 2) that there exists $\overline{\tau}\in (-1,1)$ such that 
 \begin{equation}
\label{claim:lem-pos}
\|u_{1,\overline{\tau}}\|_{p_1}^{p_1}=\|u_1\|_{p_1}^{p_1}.
\end{equation}
In fact, if we prove \eqref{claim:lem-pos}, then $U_{\overline{\tau}}\in \Pi_\omega$ and $A_\omega(U_\tau)<A_\omega(U)$. On the other hand, $U$ is a minimizer of $A_\omega$ on $\Pi_\omega$ by Lemma \ref{lem:equiv-Aom}, but this generates a contradiction, ensuring that $|\widehat{\Omega}|=0$ and $\phi_{1,\omega}>0$ on $\Omega$. 
Therefore, it is left to prove that $\Omega=\Rd$, i.e. $\phi_{1,\omega}(x)>0$ for every $x\in\R^2$. First of all, $|\Omega|>0$ by Proposition \ref{prop:qi>0}. Then, since $u_1$ solves \eqref{eq:stat-detailed}, $\phi_{1,\omega}$ satisfies 
			\[
			(-\lap + \omega) \phi_{1,\omega} =\abs{u_1}^{p_1-2}(\phi_{1,\omega} + q_1\mathcal{G}_{\omega}).
			\]
			Therefore, as $\phi_{1,\omega}$ is non-negative, $(-\lap + \omega) \phi_{1,\omega}\geq0$ in $H^{-1}(\R^2)$ and $\phi_{1,\omega}\not\equiv 0$ by Proposition \ref{prop:qi>0}, then by the Strong Maximum Principle (e.g.,\cite[Theorem 3.1.2]{C-18}) $\phi_{1,\omega}>0$ on $\R^2$. To prove that $\phi_{2,\omega}(x)>0$ for every $x\in\R^2$, we repeat the same arguments using the functional $B_{\omega}$ instead of ${A}_{\omega}$. The positivity of $u_i$ follows from the positivity of $\phi_{i,\omega}$ and $q_i$.\\

            \textit{Step 2. Proof of the claim \eqref{claim:lem-pos}.}
First of all, let us observe that the function $\tau\mapsto \|u_{1,\tau}\|_{p_1}^{p_1}$ is continuous on $[-1,1]$. Moreover, since 
\begin{equation*}
				\abs{u_1(x)}^2=\abs{\phi_{1,\omega}(x)}^2 + q_1^2\mathcal{G}_{\omega}^2(x)+ 2\cos(\al) q_1 \abs{\phi_{1,\omega}(x)}\mathcal{G}_{\omega}(x)< \abs{u_{1,1}(x)}^2,\quad \forall x\in\widehat{\Omega},
			\end{equation*}
			and $\abs{\widehat{\Omega}}>0$, there results that  
	$\norma{u_1}_{p_1}^{p_1}<\norma{u_{1,1}}_{p_1}^{p_1}$. 
 In order to prove \eqref{claim:lem-pos}, we need to distinguish two cases. Suppose first that there exists $\widetilde{\Omega}$, with $\abs{\widetilde{\Omega}}>0$, such that $\alpha\neq \pi$ on $\widetilde{\Omega}$. In this case, we have that
			\begin{align}
			 \norma{u_{1,-1}}^{p_1}_{L^{p_1}(\Rd)}&=\norma{u_{1,-1}}^{p_1}_{L^{p_1}(\Rd\setminus{\widetilde{\Omega}})}+\norma{u_{1,-1}}^{p_1}_{L^{p_1}(\widetilde{\Omega})}\\
    &=\norma{u_{1}}^{p_1}_{L^{p_1}(\Rd\setminus{\widetilde{\Omega}})}+\int_{\widetilde{\Omega}}
			\Bigl \lvert \abs{\phi_{1,\omega}}^2 + q_1^2\mathcal{G}_{\omega}^2 - 2\, q_1 \abs{\phi_{1,\omega}}\mathcal{G}_{\omega} \Bigr \rvert^{\frac{p_1}{2}}\, dx\\
   &<\norma{u_{1}}^{p_1}_{L^{p_1}(\Rd\setminus{\widetilde{\Omega}})}+\int_{\widetilde{\Omega}}
			\Bigl \lvert \abs{\phi_{1,\omega}}^2 + q_1^2\mathcal{G}_{\omega}^2 +2 \cos(\alpha) q_1 \abs{\phi_{1,\omega}}\mathcal{G}_{\omega} \Bigr \rvert^{\frac{p_1}{2}}\, dx\\
   &=\norma{u_{1}}^{p_1}_{L^{p_1}(\Rd)}
			\end{align}
		hence, since $\norma{u_{1,-1}}^{p_1}_{L^{p_1}(\R^2)}< \norma{u_1}^{p_1}_{L^{p_1}(\R^2)} <\norma{u_{1,1}}^{p_1}_{L^{p_1}(\R^2)}$, there exists $\overline{\tau}\in (-1,1)$ such that \eqref{claim:lem-pos} holds.
   
			Suppose instead that  $\alpha(x)=\pi$ for a.e. $x\in \Omega$: this choice of $\alpha$ can be reformulated also as $u_1(x)=u_{1,-1}(x)$ for a.e. $x\in\Omega$ or 
 as $\phi_{1,\omega}=-|\phi_{1,\omega}|$ for a.e. $x\in\Omega$. Differently from the previous case, we cannot conclude immediately using the continuity of $\|u_{1,\tau}\|_{p_1}^{p_1}$ since $\|u_{1,-1}\|_{p_1}^{p_1}=\|u_{1}\|_{p_1}^{p_1}$. To solve this issue, we observe that for every $\tau\in [-1,1]$
			\begin{align}
				\left| \frac{d}{d\tau}\abs{u_{1,\tau}}^{p_1}\right|&= \left| \frac{d}{d\tau}(\tau^2\abs{\phi_{1,\omega}}^2+ q_1^2 \mathcal{G}_{\omega}^2 + 2\tau q_1 \abs{\phi_{1,\omega}}\mathcal{G}_{\omega})^{\frac{p_1}{2}} \right| \\
    &=\left|p_1\left|\phi_{1,\omega}\right|\left|u_{1,\tau}\right|^{p_1-2}u_{1,\tau}\right|=p_1\left|\phi_{1,\omega}\right|\left|u_{1,\tau}\right|^{p_1-1}
				\leq p_1\abs{\phi_{1,\omega}}\abs{u_{1,1}}^{p_1-1}\in L^1(\R^2),
			\end{align}
			hence by dominated convergence  
			\begin{equation}
				\label{eq:d-dtau}
    \begin{split}
     \frac{d}{d\tau}{\norma{u_{1,\tau}}^{p_
     1}_{L^{p_1}(\R^2)}}\Bigr|_{\tau=-1^+}  = \int_{\R^2}  \frac{d}{d\tau}\abs{u_{1,\tau}}^{p_1}\Bigr|_{\tau=-1}  \,dx  &=  \int_{\R^2}\, p_1\abs{\phi_{1,\omega}}\abs{u_{1,-1}}^{p_1-2} u_{1,-1}\,dx\\
     &=\int_{\R^2}\, p_1\abs{\phi_{1,\omega}}\abs{u_{1}}^{p_1-2} u_{1}\,dx.   
    \end{split}
        \end{equation}
   
   Since $U$ is a ground state and solves  \eqref{eq:stat-detailed} with $\la_1=\om$,  there results that
    \begin{equation}
\label{EL-plane}
\scal{\na \chi}{\na \phi_{1,\omega}}_{L^{2}(\Rd)}+\scal{\chi}{\omega \phi_{1,\omega}-|v|^{r-2}v}_{L^{2}(\Rd)}=0\quad\forall\, \chi\in H^1(\Rd).
\end{equation}  
By choosing $\chi=|\phi_{1,\omega}|$ in \eqref{EL-plane} and recalling that $\phi_{1,\omega}=-|\phi_{1,\omega}|$ on $\Omega$ (and consequently in $\Rd$), it follows that
			\begin{equation}
				\int_{\R^2} \abs{\phi_{1,\omega}(x)}\abs{u_1(x)}^{p_1-2}u_1(x)\, dx =- \int_{\Rd} \abs{\nabla\abs{\phi_{1,\omega}(x)}}^2 \,dx- \omega\int_{\R^2} \abs{\phi_{1,\omega}(x)}^2\, dx,
			\end{equation}
			thus
			\begin{equation}
			 \label{eq:der-tau<0}   
			\frac{d}{d\tau}\norma{u_{1,\tau}}^{p_1}_{L^{p_1}(\R^2)} \,\biggr\rvert_{\tau=-1^+}< 0.
			\end{equation}
   Therefore, by combining \eqref{eq:der-tau<0} and the fact that $\norma{u_{1,-1}}^{p_1}_{L^{p_1}(\R^2)}= \norma{u_1}^{p_1}_{L^{p_1}(\R^2)} <\norma{u_{1,1}}^{p_1}_{L^{p_1}(\R^2)}$, \eqref{claim:lem-pos} follows.\\
			
            \textit{Step 3. Radiality of $\phi_{i,\omega}$ and $u_i$}. As done in Step 1, we recall that by lemma \ref{lem:equiv-Aom} $U$ is also a minimizer of $A_{\omega}$ in $\Pi_{\omega}$. In order to prove that $\phi_{1,\omega}$ and $u_1$ are radially symmetric and decreasing along the radial direction, it is sufficient to show that $\phi_{1,\omega}=\phi_{1,\omega}^\star$ a.e. on $\Rd$, where $\phi_{1,\omega}^\star$ denotes the symmetric decreasing rearrangement of $\phi_{1,\omega}$. 

For the sake of completeness, let us recall that, given a nonnegative function $f\in L^p(\R^d)$, the symmetric decreasing rearrangement $f^*$ is defined as
   \begin{equation*}
f^\star(x):=\int_0^{+\infty}\mathds{1}_{B_{r(t)}}(x)\,dt,\quad r(t):=\sqrt{\frac{|\{x\,:\,f(x)>t\}|}{\pi}}.
   \end{equation*}
   Then the following relations hold for $\phi_{1,\om}^\star$ (see for example \cite[Section 2.3]{ABCT}):
   \begin{equation}
   \label{eq:prop-rearr}
   \|\na \phi_{1,\om}^\star\|_2^2\leq \|\na \phi_{1,\om}\|_2^2,\quad \quad\|\phi_{1,\om}^\star\|_2^2=\|\phi_{1,\om}\|_2^2,
   \end{equation}
   \begin{equation}
   \label{eq:prop-rearr2}
   \forall\, p>1\quad \forall\,g \in L^p(\Rd)\quad\|\phi_{1,\om}^\star+g^\star\|_p^p\geq \|\phi_{1,\om}+g\|_p^p. 
\end{equation}
Moreover, if $g$ is radially symmetric and decreasing, then the equality in \eqref{eq:prop-rearr2} holds if and only if $\phi_{1,\om}^\star=\phi_{1,\om}$ a.e. on $\Rd$ (see again \cite[Section 2.3]{ABCT}).

   Assume now by contradiction that $\phi_{1,\omega}\neq\phi^\star_{1,\omega}$ on a set of non-zero Lebesgue measure, and define the function $U^\star:=(\phi_{1,\omega}^\star+q_1\G_\omega,u_2)$. By \eqref{eq:prop-rearr} and the fact that the equality case in \eqref{eq:prop-rearr2} is realized if and only if $\phi_{1,\om}^\star=\phi_{1,\om}$ a.e. on $\Rd$, we get that 
					$A_{\omega}(U^\star)\leq A_{\omega}(U)$
			and 
			$\|\phi_{1,\om}^\star+q_1\G_\om\|_{p_1}^{p_1}>\|\phi_{1,\om}+q_1\G_\om\|_{p_1}^{p_1}$. In particular, there exists $\tau\in(0,1)$ such that, denoted with $U^\star_\tau=(\tau\phi_{1,\om}^\star+q_1\G_\om, u_2)$,  there results that $\|\tau\phi_{1,\om}^\star+q_1\G_\om\|_{p_1}^{p_1}=\|\phi_{1,\om}+q_1\G_\om\|_{p_1}^{p_1}$, i.e. $U^\star_\tau\in \Pi_\omega$. Moreover, it holds that 
			 \begin{align}
				\frac{p_1-2}{2p_1}Q_{\omega}(U^\star_\tau) - 	\frac{p_1-2}{2p_1}Q_{\omega}(U^\star) 
				&=-\frac{p_1-2}{2p_1} (1-\tau^2)\left( \norma{\nabla\phi^\star_{1,\omega}}_2^2 + \, \omega \norma{\phi^\star_{1,\omega}}_2^2  \right)
				<0,
			\end{align}
			thus $A_\om(U^\star_\tau)<A_\om(U^\star)\leq A_\om(U)$,
			which contradicts the fact that $U$ is a minimizer of the action at frequency $\omega$, hence $\phi_{1,\omega}$ and $u_1$ are radially symmetric and decreasing. The same properties can be proved for $\phi_{2,\omega}$ and $u_2$ (using  the functional $B_\om$ instead of $A_\om$), hence the proof is concluded.\\
		
        \end{proof}
  
		\section{Proof of Theorems \ref{thm:p1=p2} and \ref{thm:sigma1=sigma2}}

\label{sec:prfthm1516}

 This section is devoted to the proof of Theorem \ref{thm:p1=p2} and Theorem \ref{thm:sigma1=sigma2}. Let us start with the proof of Theorem \ref{thm:p1=p2}.
		\begin{proof}[Proof of the Theorem \ref{thm:p1=p2}] 
  Let us start with the proof of point (i). Let $U=(u_1,u_2)$ be a ground state of \eqref{eq:F} at mass $\mu$, $q_i$ and $\mu_i$ be the charge and the mass associated with $u_i$ for $i=1,2$. By Theorem \ref{thm:charact}, we can assume that $q_i$ and $u_i$ are positive for $i=1,2$. Suppose now by contradiction that $q_1\leq q_2$ and consider the function $\widetilde{U}=(u_2,u_1)\in \La_\mu$, which satisfies 
			\begin{align}
   \label{eq:q-contr}
		F(\widetilde{U})&=F_1(u_2) + F_2(u_1) -\beta\, q_1\,q_2 \\
   &=F_1(u_1)+\frac{\sigma_1}{2}\left(q_2^2-q_1^2\right)+F_2(u_2)+\frac{\sigma_2}{2}\left(q_1^2-q_2^2\right)-\beta q_1 q_2\\
			&=F_1(u_1) + F_2(u_2)-\beta q_1\,q_2 + \f{(\sigma_{2}-\sigma_{1})}{2} (q_1^2-q_2^2)\\
   &=F(U)+\f{(\sigma_{2}-\sigma_{1})}{2} (q_1^2-q_2^2).
			\end{align}
   In particular, if $q_1<q_2$, then by \eqref{eq:q-contr} we get that $F(\widetilde{U})<F(U)$, which contradicts the fact that $U$ is a ground state. If instead $q_1=q_2=:q$, then $F(U)=F(\widetilde{U})$, hence $\widetilde{U}$ is a ground state too and solves \eqref{eq:stat-detailed}. In particular, since both $U$ and $\widetilde{U}$ solve \eqref{eq:stat-detailed}, there results that
   \begin{equation*}
   \begin{cases}
       \phi_{1,\omega}(0)=(\sigma_1+\theta_\omega-\beta)q\\
       \phi_{2,\omega}(0)=(\sigma_2+\theta_\omega-\beta)q,
   \end{cases}
   \quad\text{and}\quad
   \begin{cases}
      \phi_{2,\omega}(0)=(\sigma_1+\theta_\omega-\beta)q\\
       \phi_{1,\omega}(0)=(\sigma_2+\theta_\omega-\beta)q, 
   \end{cases}
   \end{equation*}
   hence $\sigma_1=\sigma_2$, which is again a contradiction and concludes the proof of point (1).\\
			
   Let us focus now on the proof of point (ii). Fix $\beta>0$, $\sigma_1\in \R$, denote with $p:=p_1=p_2$ and let $\sigma_2$ vary in the interval $(\sigma_1,+\infty)$: let us denote with $\delta:=\sigma_2-\sigma_1\in (0,+\infty)$. We observe that, by Theorem \ref{thm:existence},  there exists at least a ground state of \eqref{eq:F} at mass $\mu$, that we denote by $U_\delta=(u_{1,\delta}, u_{2,\delta})$. We consider for $i=1,2$ the decomposition $u_{i,\delta}=\phi_{i,\delta}+q_{i,\delta}\G_\la$, corresponding to $\la=4e^{4\pi(2\beta-\sigma_1)-2\gamma}$ or, equivalently, to $\sigma_1+\theta_\la=2\beta$, and we call $m_\delta$ the mass of $u_{1,\delta}$. By arguing as in \eqref{bound-Fun-partial} and using that $F(U_{\delta})<0$, we get that 
   \begin{equation*}
       \sum_{i=1,2}\left\{\frac{1}{4}\NN_{\sigma_i}(u_{i,\delta})^2-\frac{C_{p}\mu}{p}\NN_{\sigma_i}(u_{i,\delta})^{p-2}\right\}<\frac{\la}{2}\mu,
   \end{equation*}
   thus $\NN_{\sigma_i}(u_{i,\delta})$ are equibounded in $\delta>0$ for $i=1,2$. In particular, this entails that $(\phi_{i,\delta})_{\delta>0}$ is equibounded in $H^1(\Rd)$ and both $(u_{i,\delta})_{\delta>0}$ and $(\phi_{i,\delta})_{\delta>0}$ are equibounded in $L^r(\Rd)$ for every $r\geq 2$ for $i=1,2$. Moreover, $(q_{1,\delta})_{\delta>0}$ is equibounded and \begin{equation*}
(\sigma_2+\theta_\la)q_{2,\delta}^2=(2\beta+\delta)q_{2,\delta}^2\leq C,
   \end{equation*}
   from which we deduce that $q_{2,\delta}\to 0$ as $\delta\to+\infty$.  Since by Lagrange theorem it holds  that for every $r\geq 2$
   \begin{equation}
\left|\|u_{2,\delta}\|_{r}^{r}-\|\phi_{2,\delta}\|_{r}^{r}\right|\leq r\sup\left\{\|u_{2,\delta}\|_{r}^{r-1}, \|\phi_{2,\delta}\|_{r}^{r-1}\right\}|q_{2,\delta}|\|\G_{\la}\|_{r}\to 0,\quad \delta\to+\infty,
			\end{equation}
   it follows that
			\begin{align}
				0>F_{2}(u_{2,\delta})-E_{p}(\phi_{2,\delta})=&\f{\la}{2}\left(\norma{\phi_{2,\delta}}_2^2 - \norma{u_{2,\delta}}_2^2\right) + \f{q_{2,\delta}^2}{2}(\sigma_1+ \delta + \theta_{\la})\\
    &-\frac{1}{p}\left(\norma{{u_{2,\delta}}}_{p}^{p} - \|\phi_{2,\delta}\|_{p}^{p}\right) = \frac{\delta}{2} q_{2,\delta}^{2}+o(1),
			\end{align}
			hence $\delta q_{2,\delta}^{2}\to 0$ as $\delta\to+\infty$. Therefore,
			\begin{align}
   \label{eq:Fmu>F1Ep}
			\F(\mu)=F(U_{\delta})=F_{1}(u_{1,\delta}) + F_{2}(u_{2,\delta}) + o(1)
			&=F_{1}(u_{1,\delta}) + E_{p}(\phi_{2,\delta}) + o(1)\\
   &\geq \F_{1}(m_{\delta}) + \mathcal{E}_{p}(\mu-m_{\delta}) + o(1),\quad\delta\to+\infty.
			\end{align}
On the other hand, since $\F_{\sigma,p}$ is an increasing function of $\sigma$ by \cite[Lemma 4.3]{ABCT-24} and  \eqref{eq:Fmu=F12mu}-\eqref{eq:Fmu<F12mu} hold, we get that
				\begin{align}
				\F(\mu)=&F(U_{\delta}) \leq \min \{\F_{1}(\mu),\F_{2}(\mu)\}= \F_1(\mu),  
				\end{align}
    thus combining with \eqref{eq:Fmu>F1Ep}, there results that
    \begin{equation}
    \label{eq:contr-thm3}
        \F_1(\mu)\geq \F_1(m_\delta)+\Eps_p(\mu-m_\delta)+o(1), \quad \delta\to+\infty.
    \end{equation}

    We claim that $m_\delta\to\mu$ and $\F(\mu)\to \F_1(\mu)$. Indeed, 
if we suppose by contradiction that there exists $\delta_n\to+\infty$ as $n\to+\infty$ and  $\varepsilon>0$ such that $m_{\delta_n}\leq \mu-\varepsilon$ for every $n\in\N$, then by concavity of both $\F_1$ and $\Eps_p$ and since $\F_1(\mu)<\Eps_p(\mu)$ for every $\mu>0$, we have that 
			\begin{align}
		\lim_{n\to+\infty}[\F_{1}(m_{\delta_n}) + \mathcal{E}_p(\mu-m_{\delta_n}) +o(1)] &\geq \inf_{m\in[0,\mu-\varepsilon]} \{\F_{1}(m) + \mathcal{E}_p(\mu-m) \} \\
		&= \min \left\{ \mathcal{E}_p(\mu), \F_{1}(\mu-\varepsilon) + \mathcal{E}_p(\varepsilon)\right\} \\
		&>  \min \left\{ \mathcal{E}_p(\mu), \F_{1}(\mu)\right\} =\F_{{1}}(\mu), 
		\end{align}
				which is in contradiction with \eqref{eq:contr-thm3} and entails that $m_\delta\to \mu$ as $\delta\to+\infty$. Moreover, by \eqref{eq:Fmu>F1Ep} and \eqref{eq:contr-thm3}, we deduce that $\F(\mu)\to \F_1(\mu)$, concluding the proof of point (ii). 
				
				\end{proof}

   We are now ready to prove Theorem \ref{thm:sigma1=sigma2}.
   
   \begin{proof}[Proof of Theorem \ref{thm:sigma1=sigma2}]
   Given $\sigma=\sigma_1=\sigma_2$, we know by Theorem \ref{thm:existence} that there exists at least a ground state of \eqref{eq:F} at mass $\mu$, that we denote by $U_\sigma=(u_{1,\sigma}, u_{2,\sigma})$. Fixed $\la=4e^{-2\gamma}$, we represent $u_{i,\sigma}$ as $u_{i,\sigma}=\phi_{i,\sigma}+q_{i,\sigma}\G_\la$ for $i=1,2$. By using \eqref{eq:GN-noja} and the fact that $F(U_\sigma)<0$, we get that for $\sigma$ sufficiently large \eqref{bound-Fun} holds, hence $\NN_{\sigma}(u_{i,\sigma})$ is equibounded in $\sigma>0$. This entails that
   $\sigma q_{i,\sigma}^2$ is bounded, hence $q_{i,\sigma}\to 0$ as $\sigma \to+\infty$ for $i=1,2$.

   Therefore, as $\sigma\to+\infty$
\begin{align}
\label{eq:FmuleqEpi}
\F(\mu)=F(U_\sigma)&=\sum_{i=1,2}\left\{\frac{1}{2}\|\na \phi_{i,\sigma}\|_2^2+ \frac{1}{2}\la\left(\|\phi_{i,\sigma}\|_2^2-\|u_{i,\sigma}\|_2^2\right)-\frac{1}{p_i}\|u_{i,\sigma}\|_{p_i}^{p_i}\right\}+o(1)\\
&=\sum_{i=1,2} E_{p_i}(\phi_{i,\sigma})+ o(1)\geq \sum_{i=1,2}\mathcal{E}_{p_i}(\|\phi_{i,\sigma}\|_2^2) +o(1)\\
&=\sum_{i=1,2}\mathcal{E}_{p_i}(\|u_{i,\sigma}\|_2^2) +o(1)\geq \inf_{m\in[0,\mu]} \{\mathcal{E}_{p_1}(m) + \mathcal{E}_{p_2}(\mu-m) \}+o(1)\\
&= \min \{ \mathcal{E}_{p_1}(\mu), \mathcal{E}_{p_2}(\mu) \}+o(1).
\end{align}
On the other hand, by Lemma \ref{lem:F-inf} and Step 2 in the proof of Theorem \ref{thm:existence}, we have that $F(U_\sigma)=\F(\mu)<\F^\infty(\mu)=\min\{\Eps_{p_1}(\mu), \Eps_{p_2}(\mu)\}$, thus together with \eqref{eq:FmuleqEpi} gives
\begin{equation}
    \lim_{\sigma\to+\infty}F(U_\sigma)=\min\{\Eps_{p_1}(\mu), \Eps_{p_2}(\mu)\}.
\end{equation}

Let us distinguish now the cases $\mu<\mu^\star$ and $\mu>\mu^\star$. If $\mu<\mu^\star$, then by Proposition \ref{livellidienergia} there results that $\min\{\Eps_{p_1}(\mu), \Eps_{p_2}(\mu)\}=\Eps_{p_1}(\mu)$ and
\begin{equation}
  \label{eq:FsigmatoEp1}  
\lim_{\sigma\to+\infty}F(U_\sigma)=\Eps_{p_1}(\mu).
\end{equation}
\eqref{eq:FsigmatoEp1} entails also that $\|u_{1,\sigma}\|_2^2\to \mu$ as $\sigma\to+\infty$. Indeed, if we suppose by contradiction that there exists $\varepsilon>0$ such that $\|u_{1,\sigma}\|_2^2\leq \mu-\varepsilon$ along a subsequence of $\sigma\to+\infty$, then arguing as in \eqref{eq:FmuleqEpi} we get that
\begin{equation}
   \lim_{\sigma\to+\infty} F(U_\sigma)\geq \lim_{\sigma\to+\infty}{\sum_{i=1,2}\Eps_{p_i}(\|u_{i,\sigma}\|_2^2)}\geq\min\{\Eps_{p_1}(\mu-\varepsilon),\Eps_{p_2}(\mu)\}>\Eps_{p_1}(\mu),
\end{equation}
which contradicts \eqref{eq:FsigmatoEp1}.

If instead $\mu>\mu^\star$, then $\min\{\Eps_{p_1}(\mu), \Eps_{p_2}(\mu)\}=\Eps_{p_2}(\mu)$, hence it is possible to deduce that $F(U_\sigma)\to \Eps_{p_2}(\mu)$ and $\|u_{2,\sigma}\|_2^2\to \mu$ as done for the case $\mu<\mu^\star$, concluding the proof.
   \end{proof}

\section*{Acknowledgements}
\noindent F.B. and I. D. have been partially supported by the INdAM GNAMPA project 2023 "Modelli nonlineari in presenza di interazioni puntuali" (CUP E53C22001930001) and I. D. also by the INdAM GNAMPA project 2024 "Modelli PDE-ODE nonlineari e proprieta' di PDE su domini standard e non-standard" (CUP E53C23001670001).

\noindent R.C. acknowledges that this study was carried out within the project E53D23005450006 "Nonlinear dispersive equations in presence of singularities" - funded by European Union - Next Generation EU within the PRIN 2022 program (D.D. 104 - 02/02/2022 Ministero dell’Università e della Ricerca). This manuscript reflects only the authors' views and opinions and the Ministry cannot be considered responsible for them.

\appendix

\section{The self-adjoint operator}

\label{app:operator}

\noindent
In this paragraph, we briefly outline the rigorous construction of the linear operator. The first version that appeared in the literature is in \cite{ES-86}. 

We rigorously define the operator $H: D(H)\subset L^2(\mathcal{I})\to L^2(\mathcal{I})$. It is defined as a self-adjoint extension of the symmetric operator $H_{0} \oplus H_{0},$ where 
\[
H_0: C^{\infty}_c(\R^2\backslash\{0\})\subseteq L^2(\R^2) \to L^2(\R^2), \qquad \, H_0u=-\Delta u.
\]

Let us first recall that the self-adjoint extensions of $H_0$ are given by a one parameter family $H_{\sigma}$, with $\sigma\in\R\cup\{\infty\}$ with domain
\[
D(H_{\sigma}):=\{u\in L^2{(\R^2)}: \exists q\in\C,\lambda>0 \text{ s.t. } u-q\mathcal{G}_{\lambda}\in H^2{(\R^2)}\text{ and } \phi_{\lambda}(0)=(\sigma + \theta_{\lambda})q\}
\]
and action
\[
H_{\sigma}\,u=-\Delta\phi_{\lambda} - q \,\lambda\,\mathcal{G}_{\lambda}, \qquad \forall u=\phi_{\lambda} + q \mathcal{G}_{\lambda}\in D(H),
\]
where $\mathcal{G}_{\lambda}:\R^2\setminus\{0\}\to\Rp$ is the Green's function of $-\Delta + \lambda$ and 
\[
\theta_{\lambda} := \frac{\log(\sqrt{\lambda})-\log(2) + \gamma}{2\pi},
\]
with $\gamma$ denoting the Euler-Mascheroni costant.

We are now ready to define for every $\sigma_1,\sigma_2 \in\R$ and  $\beta\geq 0$ the operator $H:L^2(\mathcal{I}) \to L^2(\mathcal{I})$, with domain
\begin{multline}
	\label{eq:op-dom}
	D(H):= \biggl\{ U=(u_1,u_2)\in L^2(\mathcal{I}) \,:\, \exists\, q_i\in \C, \lambda_i>0\,:\,u_i-q_i\,\mathcal{G}_{\lambda_i}:=\phi_{i,\lambda_i}\in H^2(\R^2),\\
		 \begin{pmatrix}
			\phi_{1,\lambda_{1}}(0)-\theta_{\lambda_1} q_1\\
			\phi_{2,\lambda_{2}}(0)-\theta_{\lambda_2} q_2
		\end{pmatrix}= \begin{pmatrix}
		\sigma_{1} &-\beta\\
		-\beta & \sigma_{2}
	\end{pmatrix}
\begin{pmatrix}
			q_1\\
			q_2
		\end{pmatrix}
	\biggl\}
\end{multline}
and action 
\begin{equation}
	\label{eq:act-op}
	H U = (-\lap \phi_{1,\la_1}-\la_1 q_1\G_{\la_1},-\lap \phi_{2,\la_2}-\la_2 q_2\G_{\la_2}).
\end{equation}

Equation \eqref{eq:stat} can thus be rewritten as 
\begin{equation}
	\label{eq:stat-detailed}
	\begin{cases}
		(-\lap+\om)\phi_{i,\lambda_{i}} + (\omega-\lambda_i) q_i\, \mathcal{G}_{\lambda_i} - \abs{u_i}^{p_i-2}u_i=0,\quad i=1,2,\\
		\phi_{1,\la_1}(0)=(\sigma_1+\theta_{\la_1})q_1-\beta q_2,\\
  \phi_{2,\la_2}(0)=-\beta q_1+(\sigma_2+\theta_{\la_2})q_2.
	\end{cases}
\end{equation}

\begin{remark}
		It is possible to define the operator $H$ for every $\beta\in\C$: it is sufficient to substitute the condition in \eqref{eq:op-dom} with the more general condition
  \begin{equation}
  \label{eq:betacomplex}
      \begin{pmatrix}
			\phi_{1,\lambda_{1}}(0)-\theta_{\lambda_1} q_1\\
			\phi_{2,\lambda_{2}}(0)-\theta_{\lambda_2} q_2
		\end{pmatrix}= \begin{pmatrix}
		\sigma_{1} &-\beta\\
		-\overline{\beta} & \sigma_{2}
	\end{pmatrix}
\begin{pmatrix}
			q_1\\
			q_2
		\end{pmatrix}
  \end{equation}
  Nevertheless, from the variational perspective, without loss of generality we can reduce to the study of the case $\beta\geq 0$, as observed in \cite{ABCT-24}.
		\end{remark}
		
		\begin{remark}
		Let us highlight that boundary condition \eqref{eq:op-dom} does not exhaust all the possible boundary conditions that make the operator $H$ self-adjoint. All the possible boundary conditions look like
		\begin{equation*}
			(U+I)
			  \begin{pmatrix}
				L_1(u_1)\\
				L_1(u_2)
			\end{pmatrix}
			=i(U-I)\begin{pmatrix}
			L_0(u_1)\\
			L_0(u_2)
			\end{pmatrix},
			\end{equation*}
			where $L_0(\phi_\la+q\G_\la):=-\frac{q}{2\pi}$, $L_1(\phi_\la+q\G_\la):=\phi_\la(0)-\theta_\la q$ and $U$ is a 2x2 unitary matrix.
			
			 In \cite{ES-86}, the authors classified all the boundary conditions into five classes. The first one is the largest class of boundary conditions and correspond to the case discussed in the present paper (see \eqref{eq:op-dom}). Among the other classes, the only other class that permits a coupling between the two planes is the second class (see \cite[Proposition 3]{ES-86}), while the third, fourth and fith class make the two planes completely decoupled.
		\end{remark}


    


\begin{thebibliography}{99}



\bibitem{ABCT}
Adami R., Boni F., Carlone R., Tentarelli L.,
Ground states for the planar NLSE with a point defect as minimizers of the constrained energy,
\emph{Calc. Var. PDE} {\bf 61} (2022), no.5, Paper No. 195, 32 pp.

\bibitem{ABCT-24}
Adami R., Boni F., Carlone R., Tentarelli L.,
NLS Ground states on a hybrid plane, arXiv:2401.09888.

\bibitem{ABCT-24-2}
Adami R., Boni F., Carlone R., Tentarelli L.,
The search for NLS ground states on a hybrid domain: Motivations, methods, and results, arXiv:2404.07843.

\bibitem{ABD}
Adami R., Boni F., Dovetta S., Competing nonlinearities in NLS equations as source of threshold phenomena on star graphs,{\em J. Funct. Anal.} {\bf283}(1) (2022), 109483.

\bibitem{acfn_aihp}
Adami R., Cacciapuoti C., Finco D., Noja D., 
 Constrained energy minimization and orbital stability for the NLS equation on a star graph, {\em Ann. Inst. H. Poincar\'e (C) An. Non Lin.} {\bf 31}(6) (2014), 1289--1310.
 
\bibitem{ADST}
Adami R., Dovetta S., Serra E., Tilli P.,  Dimensional crossover with a continuum of critical exponents for NLS on doubly periodic metric graphs, {\em Anal. PDE} {\bf12}(6), (2019), 1597--1612.	

\bibitem{ASTcpde}
Adami R., Serra E., Tilli P., NLS ground states on graphs, {\em Calc. Var. PDE} \textbf{54}, (2015), 743--761.

\bibitem{AST}
Adami R., Serra E., Tilli P., Threshold phenomena and existence results for NLS ground states on graphs, {\em J. Funct. An.} \textbf{271}(1), (2016), 201--223. 

\bibitem{AGHKH-88} 
Albeverio S., Gesztesy F., H{\o}egh-Krohn R., Holden H.,
\emph{Solvable Models in Quantum Mechanics},
Texts and Monographs in Physics, Springer-Verlag, New York, 1988.

\bibitem{BWZ}
Banerjee S., Wong P.Y., Zhang P., Contact resistance and current crowding in tunneling type circular nano-contacts, {\em J. of Physics D: Applied Physics}, {\bf 53}(35), (2020), 355301.



\bibitem{BMP}
Berkolaiko G., Marzuola J.L., Pelinovsky D.E., Edge--localized states on quantum graphs in the limit of large mass, {\em Ann. Inst. H. Poincaré (C) An. Non Lin.} {\bf 38}(5), (2021), 1295--1335.

\bibitem{BLS}
Berkolaiko G., Latushkin Y., Sukhtaiev S.,
 Limits of quantum graph operators with shrinking edges, {\em Adv. Math.} {\bf352}, (2019), 632--669.

\bibitem{BD22}
Boni F., Dovetta S.,  Doubly nonlinear Schr\"odinger ground states on metric graphs, {\em Nonlinearity} {\bf35} (2022), 3283--3323.

\bibitem{BCT}
Borrelli W., Carlone R., Tentarelli L.,
 Nonlinear Dirac equation on graphs with localized nonlinearities: bound states and nonrelativistic limit,
{\em SIAM J. Math. An.} {\bf51}(2), (2019), 1046--1081.

\bibitem{BL-83}
Brezis H., Lieb E.H., 
A relation between pointwise convergence of functions and convergence of functionals, \emph{Proc. Amer. Math. Soc.} {\bf 88} (1983), no.3, 486--490.


\bibitem{CFN-22}
Cacciapuoti C., Finco D., Noja D.,
Failure of scattering for the NLSE with a point interaction in dimension two and three,
\emph{Nonlinearity} {\bf 36} (2023), no. 10, 5298--5310.

\bibitem{CFN}
Cacciapuoti C., Finco D., Noja D.,  Well posedness of the nonlinear Schr\"odinger equation with isolated singularities, {\em J. Diff. Eq.} {\bf305}, (2021), 288--318.

\bibitem{C-18}
Cazenave T.,
\emph{An introduction to semilinear elliptic equations},
2nd Ed., Editora do IM-UFRJ, Rio de Janeiro, 2018.

\bibitem{cazenave}
Cazenave T., {\em  Semilinear Schr\"odinger Equations}, Courant Lecture Notes 10, American
Mathematical Society, Providence, RI, 2003.

\bibitem{CJS}
Chang X., Jeanjean L., Soave N., Normalized solutions of $L^2$–supercritical NLS equations on compact metric graphs, {\em Ann. Inst. H. Poinc. (C) An. Non Lin.} (2023), doi:10.4171/AIHPC/88.

\bibitem{DST-23}
Dovetta S., Serra E., Tilli P.,
Action versus energy ground states in nonlinear Schr\"odinger equations,
\emph{Math. Ann.} {\bf 385} (2023), no.3--4, 1545--1576.

\bibitem{DSTaim}
Dovetta S., Serra E., Tilli P.,  Uniqueness and non--uniqueness of prescribed mass NLS ground states on metric graphs, {\em Adv. Math.} {\bf374}, (2020), 107352.


\bibitem{DT22}
Dovetta S., Tentarelli L.,  Symmetry breaking in two--dimensional square grids: persistence and failure of the dimensional crossover. {\em J. Math. Pures Appl.} {\bf160} (2022), 99--157.


\bibitem{ES-88}
Exner P., $\check{\text{S}}$eba P.,
Mathematical models for quantum point-contact spectroscopy,
\emph{Czechoslovak J. Phys. B} {\bf 38} (1988), no.1, 1--11.

\bibitem{ES-87}
Exner P., $\check{\text{S}}$eba P.,
Quantum motion on a half-line connected to a plane,
\emph{J. Math. Phys.} {\bf 28} (1987), no.2, 386--391.
Erratum: Quantum motion on a half‐line connected to a plane [J. Math. Phys. 28, 386 (1987)],
\emph{J. Math. Phys.} {\bf 28} (1987), no.9, 2254.

\bibitem{ES-86}
Exner P., $\check{\text{S}}$eba P., Quantum motion on two planes connected at one point, {\em Letters in Mathematical
Physics}, {\bf 12}(3), (1986), 193–198.

 

\bibitem{FN-22}
Finco D., Noja D., 
Blow-up and instability of standing waves for the NLS with a point interaction in dimension two,
\emph{Z. Angew. Math. Phys.} {\bf 74} (2023), no. 4, Paper No. 162, 17 pp.

\bibitem{FDNST}
Frank, D.J., Dennard, R.H., Nowak, E., Solomon, P.M., Taur, Device scaling limits of Si MOSFETs and their application dependencies, {\em Proceedings of the IEEE}, {\bf 89}(3), (2001), 259--288.

\bibitem{FGI-22}
Fukaya N., Georgiev V., Ikeda M., 
On stability and instability of standing waves for 2d-nonlinear Schr\"odinger equations with point interaction,
\emph{J. Differential Equations} {\bf 321} (2022), 258--295.

\bibitem{GMS}
Georgiev, V., Michelangeli, A.,  Scandone, R., Standing waves and global well-posedness for the 2d Hartree equation with a point interaction. {\em Comm. in PDE}, {\bf49}(3) (2024), 242–278. 



\bibitem{JvGW-80}
Jansen A.G.M., van Gelder A.P., Wyder P., Point-contact spectroscopy in metals, {\em J. Phys. C: Solid State Phys.} {\bf13} (1980), 6073.

\bibitem{JL-22}
Jeanjean L., Lu S-S.,
On global minimizers for a mass constrained problem,
\emph{Calc. Var. PDE} {\bf 61} (2022), no.6, Paper No. 214, 18 pp.

\bibitem{MNS}
Mugnolo D., Noja D., Seifert C., Airy-type evolution equations on star graphs,
{\em Anal. PDE} {\bf11}(7), (2018), 1625--1652.

\bibitem{pics}
Naidyuk, Yu. G. and  Yanson I. K., {\em Point-Contact Spectroscopy}, Springer Series in Solid-State Sciences {\bf 10}, Springer New York, 2004.

\bibitem{PS20}
Pierotti D., Soave N., 
Ground states for the NLS equation with combined nonlinearities on non-compact metric graphs, {\em SIAM J. Math. Anal.} {\bf54}(1) (2022), 768--790.

\bibitem{RSII-80}
Reed M., Simon B.,
\emph{Methods of modern mathematical physics. II. Fourier Analysis and Self-adjointness},
Second edition, Academic Press, Inc., New York,
1980.


\end{thebibliography}
	\end{document}